\documentclass[10pt,letter,reqno]{amsart}
\usepackage{amsfonts,amsthm,color,epsfig,latexsym,amsmath,amssymb,amscd,amsmath,epsf,graphicx,rotating}

\newtheorem{dfn}{Definition}
\newtheorem{thm}{Theorem}
\newtheorem{lemma}{Lemma}
\newtheorem{corollary}{Corollary}
\newtheorem{prop}{Proposition}
\newtheorem{remark}{Remark}

\renewcommand{\qed}{$\blacksquare$}
\newtheorem{thma}{Theorem A}

\newtheorem{thmb}{Theorem B}

\newcommand{\amoeba}{\mathrm{Amoeba}}
\newcommand{\anewt}{\mathrm{ArchNewt}}
\newcommand{\newt}{\mathrm{Newt}}
\newcommand{\Log}{\mathrm{Log}}
\newcommand{\N}{\mathbb{N}}
\newcommand{\C}{\mathbb{C}}
\newcommand{\R}{\mathbb{R}}
\newcommand{\Rn}{\R^n}
\newcommand{\Cs}{\C^*}
\newcommand{\ii}{I\!I} 
\newcommand{\iip}{I\!I^\geq}
\newcommand{\sip}{S\!I^\geq}
\newcommand{\si}{S\!I} 
\newcommand{\rr}{R\!R}
\newcommand{\rs}{S\!S}

\newcommand{\al}{\alpha}

\newcommand{\la}{\lambda}

\newcommand{\Z}{\mathbb{Z}}

\newcommand{\ga}{\gamma}
\newcommand{\eps}{\varepsilon}

\begin{document}
\numberwithin{equation}{section}

\title[New Multiplier Sequences via Discriminant Amoebae]
{\mbox{}\\
\vspace{-1.5in}
New Multiplier Sequences via Discriminant Amoebae}  

\author[M.\ Passare]{Mikael Passare}
\address{Department of Mathematics, Stockholm University, SE-106 91, Stockholm,
            Sweden}
\email{passare@math.su.se}

\author [J.\ M.\ Rojas]{J.\ Maurice Rojas}
\address{
Department of Mathematics, 
Texas A\&M University, 3368 TAMU,  College Station, Texas 77843,  USA}
\email{rojas@math.tamu.edu}

\author[B.\ Shapiro]{Boris Shapiro}
\address{Department of Mathematics, Stockholm University, SE-106 91, Stockholm,
            Sweden}
\email{shapiro@math.su.se}

\date{\today}
\keywords{multiplier sequence, discriminant, amoeba, chamber}
\subjclass[2000]{Primary 12D10, Secondary  32H99}

\begin{abstract}
In their classic 1914 paper,  
Pol\'ya and Schur introduced and characterized two types of  
linear operators acting diagonally on the monomial basis of 
$\R[x]$, sending real-rooted polynomials (resp.\ polynomials 
with all nonzero roots of the same sign) to real-rooted polynomials. 
Motivated by fundamental properties of amoebae and discriminants discovered 
by Gelfand, Kapranov, and Zelevinsky, we introduce two new natural classes of  
polynomials and describe diagonal operators preserving these new classes. A 
pleasant circumstance in our description is that these classes have a 
simple explicit description, one of them coinciding with 
the class of log-concave sequences. 
\end{abstract}

\maketitle

\vspace{-1cm} 
\section{Introduction}
\label{sec:int}  
The theory of {\em linear preservers} (linear operators preserving certain 
families of matrices or polynomials) is a widely developed and active area of 
mathematics (see, e.g., \cite{SU} and the references therein). 
Linear preservers have found applications in many 
areas such as approximation theory, probability theory, and 
statistics (see, e.g., \cite{karlin}), and have even been 
used to give interesting reformulations of the Riemann Hypothesis 
\cite{csordas}. One of the most classical instances of the theory of linear 
preservers occurs in the setting of real-rooted polynomials, initiated in the 
late 19th century by Laguerre and Hermite.  

Given a sequence of real numbers  $\ga=\{\ga_{j}\}^\infty_{j=0}$  
consider the linear operator $T_{\ga}: \R[x]\to \R[x]$ acting on  
each $x^j$ by multiplication by $\ga_j$. We refer to such a $T_\ga$ as 
the {\em diagonal operator} corresponding to $\ga$. Let 
$\rr\subset\!\R[x]$ denote the collection of polynomials all of whose 
complex roots are real, i.e., {\em \underline{r}eal-\underline{r}ooted} 
polynomials. Following  
\cite{PS} we 
call $\gamma$ a {\em multiplier sequence} ({\em``Faktorenfolge"}) 
{\em of the first kind} if $T_\ga(\rr)\!\subseteq\!\rr$. Similarly, 
let $\rs$ denote the subset of $\rr$ consisting of polynomials $p$ whose 
nonzero roots (all real, by assumption) are all of the \underline{s}ame 
\underline{s}ign. 
A {\em multiplier sequence of the 2nd kind} is then a $\gamma$ with 
$T_\ga(\rs)\!\subseteq\!\rr$.  

The following result of Pol\'ya and Schur is fundamental. 
\begin{thma}\label{th:PS} \cite{PS} 
Let $\ga=\{\ga_j\}^\infty_{j=0}$ be a sequence of real numbers
and $T_\ga : \R[x] \to  \R[x]$ the corresponding diagonal  operator. 
Then: 
\begin{enumerate} 
\item[(i)]{$\ga$ is a multiplier sequence of the $1$st kind
(i.e., $T_\ga(\rr)\!\subseteq\!\rr$)
iff for all $n\!\in\!\N$ we have $T_\ga((1+x)^n)\!\in\!\rs$.} 
\item[(ii)]{$\ga$ is a multiplier sequence of the $2$nd kind
(i.e., $T_\ga(\rs)\!\subseteq\!\rr$)
iff for all $n\!\in\!\N$ we have $T_\ga((1+x)^n)\!\in\!\rr$. \qed} 
\end{enumerate}
\end{thma}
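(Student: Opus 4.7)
My plan is to split each biconditional into its \emph{only if} and \emph{if} direction. The \emph{only if} implications follow nearly from the definitions, with a supplementary sign-pattern argument in case~(i), while both \emph{if} implications reduce to a single application of the classical Schur--Szeg\H{o} composition theorem.

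For the \emph{only if} implications, first note that $(1+x)^n$ has $-1$ as its unique root (with multiplicity $n$), so $(1+x)^n\in\rs\subset\rr$. In~(ii), the hypothesis that $T_\ga$ sends $\rs$ into $\rr$ gives $T_\ga((1+x)^n)\in\rr$ immediately. In~(i), the hypothesis that $T_\ga$ sends $\rr$ into $\rr$ yields $T_\ga((1+x)^n)\in\rr$, and I would upgrade this to $\rs$ by showing that the coefficients $\ga_0,\ldots,\ga_n$ are either all of one sign or alternate in sign, so that the real-rooted polynomial $T_\ga((1+x)^n)=\sum_k\binom{n}{k}\ga_k x^k$ has all its roots on one side of zero. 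The parity-separated sign pattern emerges by applying $T_\ga$ to $(1-a^2x^2)^m$ and to $x(1-a^2x^2)^m$ for varying $a\in\R$ and $m\le n/2$, then passing to the variable $y=x^2$: real-rootedness with nonnegative $y$-roots forces all $\ga_{2k}$ to share a common sign and all $\ga_{2k+1}$ to share a common sign. Linking the two parities uses $T_\ga$ applied to $(x+a_1)\cdots(x+a_n)$ with $a_i>0$: since the image must be real-rooted and the product of its roots has sign $\ga_0/\ga_n$, the resulting Vieta constraints force the sign of $\ga_{2k+1}$ relative to $\ga_{2k}$ to propagate uniformly across $k$.

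For the \emph{if} implications I would invoke the Schur--Szeg\H{o} composition theorem: if $A(x)=\sum_{k=0}^n\binom{n}{k}a_kx^k$ and $B(x)=\sum_{k=0}^n\binom{n}{k}b_kx^k$ both lie in $\rr$ and at least one of them lies in $\rs$, then $\sum_{k=0}^n\binom{n}{k}a_kb_kx^k$ lies in $\rr$. Setting $A:=T_\ga((1+x)^n)=\sum_k\binom{n}{k}\ga_k x^k$, so $a_k=\ga_k$, and writing an arbitrary $p\in\R[x]$ of degree $\le n$ as $p(x)=\sum_k\binom{n}{k}\be_k x^k$, the Schur--Szeg\H{o} composition of $A$ with $p$ equals $\sum_k\binom{n}{k}\ga_k\be_k x^k=T_\ga(p)$. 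In case~(i) the hypothesis supplies $A\in\rs$ and we choose $B=p\in\rr$; in case~(ii) the hypothesis supplies $A\in\rr$ and we choose $B=p\in\rs$. Either way Schur--Szeg\H{o} yields $T_\ga(p)\in\rr$, which is the desired implication.

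The principal obstacle is the Schur--Szeg\H{o} composition theorem itself, which is classical but not trivial; I would cite it rather than rederive it. Its standard proof goes through Grace's apolarity theorem (equivalently, the Grace--Walsh--Szeg\H{o} coincidence theorem) applied to the symmetric multi-affine polarizations of $A$ and $B$ as functions on $\C^n$. Without that black box one would have to redevelop the polarization/coincidence machinery from scratch, which would constitute nearly all of the technical work of the proof.
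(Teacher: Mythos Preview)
The paper does not prove Theorem~A at all: it is quoted as a classical result of P\'olya and Schur, cited to \cite{PS}, and closed immediately with a \qed. There is therefore no in-paper argument to compare your proposal against.

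On its own merits your proposal is essentially the standard route and is correct. The ``if'' directions via the Schur--Szeg\H{o} composition theorem are exactly the canonical argument, and your identification of $T_\ga(p)$ with the Schur--Szeg\H{o} composition of $T_\ga((1+x)^n)$ and $p$ is right. The ``only if'' for (ii) is trivial, as you say. For (i), your parity argument in fact already finishes the job without the extra ``linking'' step: once you know that all $\ga_{2k}$ share a common (weak) sign and all $\ga_{2k+1}$ share a common (weak) sign, the full sequence is automatically either of one sign throughout or alternating, and in either case the coefficients of $T_\ga((1+x)^n)=\sum_k\binom{n}{k}\ga_k x^k$ force all nonzero roots to lie on one side of the origin. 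The additional Vieta argument you sketch is harmless but not needed. The only places requiring genuine care are the degenerate situations where some $\ga_j$ vanish (one must check that the support of a first-kind multiplier sequence is an interval of indices), but these are routine refinements of the same idea.
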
 
\begin{remark} 
Pol\'ya and Schur also obtained a transcendental 
characterization in terms of the generating function 
$\Phi_\ga(x)=\sum\limits^\infty_{k=0} \frac{\ga_k}{k!} x^k$. 
\end{remark} 

There exist obvious versions of these notions for polynomials of bounded
degree. In particular, a sequence $\ga=(\ga_0,\ga_1,...,\ga_k)$  will be 
referred to as a {\em multiplier sequence of length $k+1$} or simply a 
{\em finite multiplier sequence} if it has the above mentioned properties when 
acting on the linear space $\R_k[x]$ of real polynomials of degree at most 
$k$. In particular, we define $\rr_k:=\rr\cap\R_k[x]$ and 
$\rs_k:=\rs\cap \rr_k$. 

Craven and Csordas proved 60 years later that for a finite length 
multiplier\linebreak 
\scalebox{.93}[1]{sequence $\ga$, checking whether $\ga$ is of first or second 
kind can be reduced to checking the}\linebreak 
\scalebox{.95}[1]{image of just one polynomial under $T_\ga$ 
(see \cite[Thm.\ 3.7]{CC2} and \cite[Thm.\ 3.1]{CC1}).}   
\begin{thmb}\label{th:MS}
Let $\ga=(\ga_0,\ldots,\ga_k)$ and $T_\ga$ the corresponding diagonal 
operator. Then for all $k\!\in\!\N$, we have: 
\begin{enumerate}
\item[(i)]{$T_\ga(\rr_k)\!\subseteq\!\rr_k$ iff $T_\ga\!\left((1+x)^k\right)\in 
\rs$.}  
\item[(ii)]{$T_\ga\!\left(\rs_k\right)\!\subseteq\!\rr_k$ iff 
$T_\ga\!\left((1+x)^k\right)\in \rr$.}  
\end{enumerate}
\end{thmb}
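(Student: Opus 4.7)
My plan is to identify $T_\ga$ acting on $\R_k[x]$ with a Schur--Hadamard composition and then reduce the theorem to the classical Schur--Malo composition theorem. Set $P_\ga(x):=T_\ga\bigl((1+x)^k\bigr)=\sum_{j=0}^k\ga_j\binom{k}{j}x^j$. Writing any $f(x)=\sum_{j=0}^k a_jx^j\in\R_k[x]$ in the binomial-normalized basis as $\sum (a_j/\binom{k}{j})\,\binom{k}{j}x^j$, one verifies immediately that
\[
T_\ga(f)\;=\;P_\ga\odot f\;:=\;\sum_{j=0}^k \ga_j\, a_j\, x^j,
\]
where $\bigl(\sum\al_j\binom{k}{j}x^j\bigr)\odot\bigl(\sum\be_j\binom{k}{j}x^j\bigr):=\sum\al_j\be_j\binom{k}{j}x^j$ is the Schur product in this basis. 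Thus $T_\ga$ is nothing other than Schur-multiplication by $P_\ga$, and the whole theorem becomes a statement about how this product interacts with the classes $\rr$ and $\rs$.

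The central technical tool is the classical \textbf{Schur--Malo composition theorem}: if $A,B\in\R_k[x]$ are both real-rooted and at least one of them lies in $\rs$, then $A\odot B\in\rr$. A routine continuity step (multiply the lower-degree factor by $(1+\eps x)^{k-d}$ and pass to the limit $\eps\to 0^+$, invoking Hurwitz on the resulting sequence of real-rooted polynomials) removes the need to assume both factors have degree exactly $k$. With this input in hand, three of the four implications are immediate. First, (ii)$\Rightarrow$ is trivial because $(1+x)^k\in\rs_k$ forces $T_\ga\bigl((1+x)^k\bigr)\in\rr_k\subseteq\rr$. Next, (ii)$\Leftarrow$ follows by choosing $A=P_\ga\in\rr$ and $B=f\in\rs_k\subseteq\rs$ in Schur--Malo, yielding $T_\ga(f)=P_\ga\odot f\in\rr$, hence in $\rr_k$ by the degree bound. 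Finally, (i)$\Leftarrow$ is the symmetric case, taking $A=P_\ga\in\rs$ and $B=f\in\rr_k$.

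The remaining implication (i)$\Rightarrow$ is the real obstacle and is where I would spend the bulk of the effort. Applying the hypothesis to $(1+x)^k\in\rr_k$ gives only $P_\ga\in\rr$; the task is to upgrade this to $P_\ga\in\rs$, i.e.\ to show that the nonzero roots of $P_\ga$ all have the same sign. My plan is contrapositive: assuming $P_\ga$ has both a strictly positive and a strictly negative root, I exhibit an explicit $f\in\rr_k\setminus\rs_k$---of the form $f(x)=(x-c)(x+c)(1+x)^{k-2}$ for a $c>0$ tuned to the offending pair of roots of $P_\ga$---for which the Schur product $P_\ga\odot f$ acquires a conjugate pair of complex zeros, directly contradicting $T_\ga(f)\in\rr_k$. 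The prototype is the $k=2$ case, where $T_\ga(x^2-c^2)=\ga_2 x^2-\ga_0 c^2$ fails real-rootedness as soon as $\ga_0\ga_2<0$, which is precisely the condition for $P_\ga=\ga_0+2\ga_1 x+\ga_2 x^2$ to have roots of opposite signs. The technical crux, and the main obstacle, is making this coupling construction work uniformly in $k$ and at any combinatorial position of the sign-violation. A cleaner but parallel alternative is to apply $T_\ga$ to the families $(x-a)(1+x)^{k-1}$, $(x-a)(x-b)(1+x)^{k-2}$, \ldots\ for arbitrary real $a,b,\ldots$, and to extract the sign-coherence of $\bigl(\ga_j\binom{k}{j}\bigr)_{j=0}^k$ from the interlacing constraints furnished by the Hermite--Kakeya--Obreschkoff theorem on real linear pencils.
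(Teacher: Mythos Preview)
The paper does not supply its own proof of Theorem~B: it is quoted as a background result, with assertion (i) attributed to Craven--Csordas \cite[Thm.~3.7]{CC2} and assertion (ii) said to follow ``upon a closer examination of Section~3'' of the same paper. So there is no in-paper argument to compare against; one is really comparing your proposal to the Craven--Csordas approach.

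Your reduction to the Schur--Malo (Schur--Szeg\H{o}) composition theorem is the standard route and is essentially what underlies the Craven--Csordas results. The identification $T_\ga(f)=P_\ga\odot f$ is correct, and the three ``easy'' implications --- (ii)$\Rightarrow$, (ii)$\Leftarrow$, and (i)$\Leftarrow$ --- are handled cleanly by placing the $\rs$-hypothesis on whichever factor is available and invoking Schur--Malo. The continuity patch for degree deficits is routine and fine.

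The genuine gap is, as you yourself flag, the implication (i)$\Rightarrow$. You know $P_\ga\in\rr$ and must upgrade to $P_\ga\in\rs$. Your plan --- test against $f(x)=(x-c)(x+c)(1+x)^{k-2}$ and tune $c$ --- works in small cases (your $k=2$ prototype is correct, and for instance $\ga=(1,-1,-1,1)$ with $k=3$ gives $T_\ga(f)=(x-1)(x^2+c^2)$ for every $c$), but you have not shown it succeeds uniformly, and the presence of zero entries among the $\ga_j$ genuinely complicates matters. For example, the sequence $\ga=(1,1,0,0,-1)$ with $k=4$ passes \emph{every} quadratic test $x^m(ax^2+bx+c)$, so any successful construction must exploit higher-degree information; you have not indicated how the single parameter $c$ in your test family detects the sign violation in such cases. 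The alternative you mention via Hermite--Kakeya--Obreschkoff is closer to what is actually needed: one first argues that preservation of $\rr_k$ forces the sign pattern of $(\ga_j)$ on its support to be either constant or strictly alternating (this requires controlling how zero entries propagate, not just the inequalities $\ga_{m}\ga_{m+2}\ge 0$), after which $P_\ga\in\rr$ immediately yields $P_\ga\in\rs$. As written, your proposal correctly locates the obstacle but does not resolve it.
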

\begin{remark} 
While Assertion (i) is merely a rewording of \cite[Thm.\ 3.7]{CC2},\linebreak 
Assertion (ii) appears to be new and follows upon a closer examination 
of Section 3 of \cite{CC2}. 
\end{remark} 

Letting $q(x)\!:=\!x^m(1+x)^2$, note that $q\in\rs\subsetneqq\rr$ and $q$ 
has $-1$ as a root of multiplicity $2$. 
It then follows that if one decreases the coefficient of $x^{m+1}$ 
in $q$ (and leaves the coefficients of $x^m$ and $x^{m+2}$ fixed) 
then the resulting polynomial has non-real roots. With a little more work one 
then easily concludes that any multiplier sequence $\ga=(\gamma_0,\gamma_1,
\ldots)$ of first or second kind must satisfy {\em Tur\'an's Inequalities} 
(see, e.g., \cite{CVV} and  \cite[Problem 4.8]{CC3}):  
$\ga^2_j\geq\ga_{j-1}\ga_{j+1}$ for all 
$j\geq 2$. Since we can naturally identify any finite 
multiplier sequence $(\ga_0,\ldots,\ga_k)$ with the 
infinite sequence $(\ga_0,\ldots,\ga_k,0,0,\ldots)$ the 
Tur\'an Inequalities clearly hold for finite length multiplier sequences 
(of first or second kind) as well. The converse fails, however, as 
can be easily seen by perturbing the nonzero coefficients of $x^m(1+x)$ 
instead. The occurence of roots of multiplicity $>\!1$ here is one reason 
it is natural to start thinking of discriminants (see also Figures 1 and 2 
below). 

\begin{remark} 
Positive sequences satisfying Tur\'an's inequalities are called 
{\em log-concave} and find frequent applications in combinatorics. 
An analoguous notion with the coefficients weighted by binomial 
coefficients is known as {\em ultra log-concavity} \cite{Li,KSh}. 
\end{remark} 

We will return to $x^m(1+x)$ momentarily but observe now that the polynomial 
$x^m(1+x)^2$ has the following special property: all polynomials 
obtained by arbitrary sign flips of its coefficients also belong to $\rr$. 
\begin{dfn}
A real polynomial $p$ is called {\em sign-independently 
real-rooted} if $p$ is real-rooted and all polynomials obtained by  
arbitrary sign flips of the coefficients of $p$ are real-rooted as well. 
We let $\si$ denote the set of all \underline{s}ign-\underline{i}ndependently 
real-rooted polynomials and $\sip$ denote the subset of $\si$ 
consisting of polynomials with all coefficients nonnegative. 
Finally, we call $\ga$ a {\em multiplier sequence of the 
$3$rd kind} if $T_\ga\!\left(\sip\right)\!\subseteq\!\rr$.  
\end{dfn}

Clearly, $\sip\!\subsetneqq\!\si\!\subsetneqq\!\rs\!\subsetneqq\!\rr$. 
Another simple example of a sign-independently real-rooted polynomial is 
$x^m(1+x)$ and less trivial examples can be found in Section \ref{sub:real}. 
Similar to our earlier development we define $\si_k\!:=\!\si\cap \R_k[x]$ 
and $\sip_k\!:=\!\sip\cap\R_k[x]$. The sets $\sip_3$, $\rs_3$, and $\rr_3$ are 
illustrated in Figure 2 below. 

\medskip
Our main results are summarized by the following $2$ theorems and 
a corollary. 
\begin{thm}\label{th:main} $\gamma$ is a 
multiplier sequence of the third kind (finite or infinite) 
iff it is log-concave, i.e., $T_\ga\!\left(x^n(1+x)^2\right)\in\rr$ for all 
$n\!\in\!\N$. Moreover, any such $\gamma$ satisfies 
$T_\ga\!\left(\sip\right)\!\subseteq\!\sip$.  
\end{thm}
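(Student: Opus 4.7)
My plan centers on establishing a Hutchinson--Kurtz type characterization of $\sip$, which I expect to be the main lemma underlying the theorem: for $p = \sum_i a_i x^i$ with $a_i\geq 0$, we have $p\in\sip$ if and only if $a_i^2 \geq 4\,a_{i-1}\,a_{i+1}$ holds for every $i$ (with the convention $a_j=0$ outside the support). Granted this characterization, both directions of the equivalence and the ``moreover'' part follow quickly.

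For necessity of log-concavity, observe first that $x^n(1+x)^2 \in \sip$: the eight sign-flipped forms of its nontrivial quadratic factor are $\pm 1\pm 2y\pm y^2$, each with discriminant $4\mp 4 \in \{0,8\}$, and hence each real-rooted. If $\gamma$ is a multiplier sequence of the third kind, then
\[
T_\gamma\bigl(x^n(1+x)^2\bigr) \;=\; x^n\bigl(\gamma_n + 2\gamma_{n+1}x + \gamma_{n+2}x^2\bigr) \,\in\, \rr,
\]
and real-rootedness of the quadratic factor is equivalent to the Tur\'an inequality $\gamma_{n+1}^2 \geq \gamma_n\gamma_{n+2}$, i.e., log-concavity.

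For the converse together with the ``moreover'' statement, assume $\gamma$ is log-concave and let $p = \sum_i a_i x^i \in \sip$. Writing $c_i := \gamma_i a_i$ for the coefficients of $T_\gamma(p)$, I multiply the Hutchinson inequalities for $p$ with the Tur\'an inequalities for $\gamma$, using $a_i \geq 0$ throughout:
\[
c_i^2 \;=\; \gamma_i^2\,a_i^2 \;\geq\; \bigl(\gamma_{i-1}\gamma_{i+1}\bigr)\bigl(4\,a_{i-1}a_{i+1}\bigr) \;=\; 4\,c_{i-1}c_{i+1},
\]
where if the middle quantity $\gamma_{i-1}\gamma_{i+1}$ is negative the right-hand side becomes nonpositive and the inequality is automatic. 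Thus $T_\gamma(p)$ itself satisfies the Hutchinson inequalities, and the characterization in reverse yields $T_\gamma(p)\in\sip$, which establishes simultaneously that $\gamma$ is a multiplier sequence of the third kind and that $T_\gamma(\sip)\subseteq\sip$.

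The main obstacle in this plan is thus the Hutchinson characterization of $\sip$ itself. The direction $p\in\sip\Rightarrow$ Hutchinson should reduce to a worst-case sign-flip argument that isolates any three consecutive indices in the support and converts to a quadratic discriminant condition. The converse direction is a classical unconditional real-rootedness statement in the Hutchinson--Kurtz tradition, which I expect the paper to prove conceptually via the chamber structure of the complement of the real discriminant amoeba, the framework advertised by the paper's title.
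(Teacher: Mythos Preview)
Your plan rests on a characterization that is false. You claim that for $p=\sum a_ix^i$ with $a_i\ge 0$, one has $p\in\sip$ if and only if $a_i^2\ge 4a_{i-1}a_{i+1}$ for all $i$. The forward implication is correct (and is exactly the paper's Corollary~1), but the reverse implication fails already for $k=3$. Take $p(x)=1+4.1x+4.1x^2+x^3$: here $4.1^2=16.81>16.4=4\cdot 4.1$, so the Hutchinson inequalities hold strictly at both interior indices, and indeed $p$ itself is real-rooted. But the sign-flip $q(x)=1-4.1x+4.1x^2+x^3$ has discriminant
\[
-27-18(4.1)^2+(4.1)^4-4(4.1)^3+4(4.1)^3=-27-302.58+282.576\ldots<0,
\]
so $q$ has only one real root and $p\notin\sip_3$. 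Thus your ``converse direction'' step, which feeds the multiplied Hutchinson inequalities back through the characterization to conclude $T_\gamma(p)\in\sip$, cannot be completed. (The Kurtz theorem you are alluding to gives real-rootedness of the positive-coefficient polynomial only; it says nothing about its sign-flips.)

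The paper avoids this trap entirely. It never characterizes $\sip_k$ by explicit inequalities. Instead it shows (Proposition~1) that $\Log|\cdot|$ maps $\sip_k$ diffeomorphically onto a specific connected component $\Gamma_k$ of the complement of $\amoeba(\Delta_k)$, and then computes (Lemma~4, via Ronkin functions) that the \emph{recession cone} of $\Gamma_k$ is exactly the log-concavity cone $\{2x_l\ge x_{l-1}+x_{l+1}\}$. The key point is that translating a convex set by a vector in its recession cone keeps it inside itself; since $T_\gamma$ acts on $\Log|\cdot|$ coordinates by the translation $(\log\gamma_0,\ldots,\log\gamma_k)$, log-concavity of $\gamma$ gives $T_\gamma(\sip_k)\subseteq\sip_k$ directly, without knowing the shape of $\Gamma_k$ beyond its recession cone. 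Your Hutchinson cone is the \emph{supporting translate} $C^s_k$ of that recession cone (the paper's Lemma~5), which strictly contains $\Gamma_k$; that is why Corollary~1 is a consequence of the theorem rather than a tool for proving it.
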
 

\begin{corollary} 
If $p(x)=a_0+a_1x+\cdots+a_kx^k\in\sip_k$ then 
$a^2_\nu\geq 4a_{\nu-1}a_{\nu+1}$ for all $\nu\in\{1,\ldots,k-1\}$, 
and any truncated polynomial $a_mx^m+a_{m+1}x^{m+1}+\cdots+a_nx^n$ 
(for $0\leq m<n\leq k$) has all its nonzero roots negative. 
\end{corollary}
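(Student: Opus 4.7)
Both assertions will be derived from Theorem \ref{th:main} by a judicious choice of a log-concave masking sequence. For each pair $0 \leq m < n \leq k$, I would introduce the sequence $\gamma = (\gamma_j)_{j \geq 0}$ defined by $\gamma_j = 1$ for $m \leq j \leq n$ and $\gamma_j = 0$ otherwise, and begin by verifying that $\gamma$ is log-concave. Because the support is a contiguous block of integers and the only entries are $0$ and $1$, the Tur\'an inequality $\gamma_j^2 \geq \gamma_{j-1}\gamma_{j+1}$ reduces at every index to one of $1 \geq 1$, $1 \geq 0$, or $0 \geq 0$; this verification is the shortest step.

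Once log-concavity is in hand, Theorem \ref{th:main} supplies two facts simultaneously: $\gamma$ is a multiplier sequence of the third kind, and the stronger preservation $T_\gamma(\sip) \subseteq \sip$ holds. Applying this to $p \in \sip_k$, I would conclude that the truncation $T_\gamma(p) = \sum_{j=m}^n a_j x^j$ again lies in $\sip$, and in particular is real-rooted with all coefficients nonnegative. The second assertion of the corollary then follows from a sign check: if $r > 0$ were a root, then $0 = \sum_{j=m}^n a_j r^j$ would be a sum of nonnegative numbers and hence force each $a_j$ to vanish, contradicting the presence of any nonzero coefficient in the truncation. Thus every nonzero root must be negative.

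For the sharpened Tur\'an inequality, I would specialize the above to $m = \nu - 1$, $n = \nu + 1$. The resulting three-term truncation $a_{\nu-1}x^{\nu-1} + a_\nu x^\nu + a_{\nu+1}x^{\nu+1}$ is real-rooted by the preceding paragraph. If $a_{\nu-1}a_{\nu+1} = 0$ the desired inequality $a_\nu^2 \geq 4 a_{\nu-1}a_{\nu+1}$ is automatic; otherwise, dividing out $x^{\nu-1}$ leaves a genuine quadratic $a_{\nu+1}x^2 + a_\nu x + a_{\nu-1}$ whose real-rootedness is equivalent, via the usual discriminant criterion, to $a_\nu^2 - 4 a_{\nu-1}a_{\nu+1} \geq 0$. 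The only nontrivial input in this whole argument is Theorem \ref{th:main}; once it is available, there is no further obstacle, and both parts of the corollary drop out from the log-concavity of characteristic sequences of intervals together with an elementary sign consideration.
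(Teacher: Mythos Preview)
Your argument is correct, and it differs from the route taken in the paper. The paper proves the inequality $a_\nu^2\geq 4a_{\nu-1}a_{\nu+1}$ by invoking Lemma~\ref{lem:sup}, which locates the supporting translate of the recession cone $C_k$ and thereby reads off the constant $\log 4$ directly from the amoeba geometry; and it obtains the truncation statement by iterating Lemma~\ref{lem:cont}, which lets one peel off the lowest (and, by the obvious $x\mapsto 1/x$ symmetry, the highest) coefficient while staying in $\sip_k$. Your approach instead treats both assertions uniformly as consequences of Theorem~\ref{th:main}: the characteristic sequence of any interval $[m,n]$ is log-concave, so $T_\gamma$ preserves $\sip$, and the truncation $\sum_{j=m}^n a_jx^j$ lands back in $\sip$; the quadratic case $[m,n]=[\nu-1,\nu+1]$ then yields the sharpened Tur\'an inequality via the ordinary discriminant. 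This is more self-contained and arguably slicker once Theorem~\ref{th:main} is in hand, since it avoids any separate appeal to the supporting-cone Lemma~\ref{lem:sup} or to an inductive fibration argument. The paper's route, on the other hand, makes the geometric origin of the constant $4$ explicit (it is the boundary shift of the supporting cone $C_k^s$) rather than recovering it from the quadratic formula.
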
 

\noindent 
\begin{picture}(200,220)(0,-37)
\put(150,0){\epsfig{file=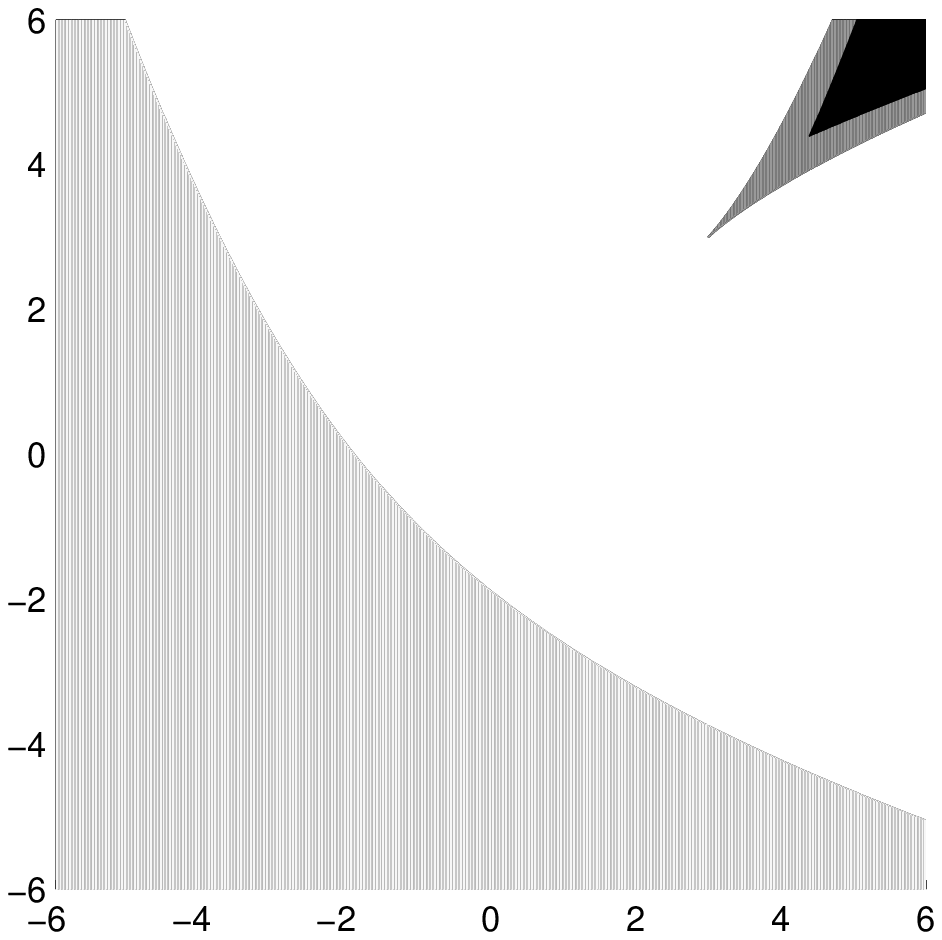,height=2.6in}} 
\put(-40,0){\epsfig{file=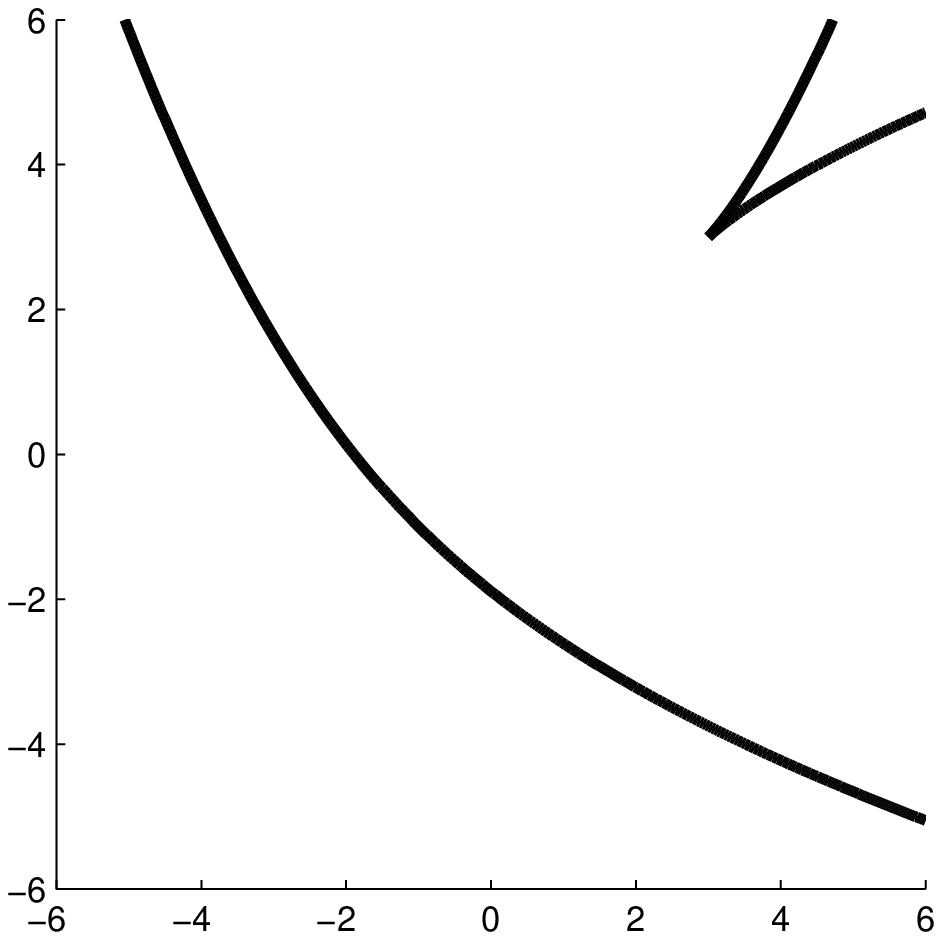,height=2.6in}}
\put(13,176){{\small $b$}} 
\put(30,60){{\small {\bf 3 real, with}}} 
\put(30,50){{\small {\bf different signs}}} 
\put(80,110){{\small {\bf only 1 real}}} 
\put(144,150){\begin{rotate}{45}
{\tiny {\bf 3 negative}} \end{rotate}}  
\put(167,20){$a$}  
\end{picture}

\vspace{-1.8cm} 
\noindent 
\begin{minipage}[t]{2.4in} 
{\scriptsize {\sc Figure 1}: The discriminant variety of the 
family\linebreak  

\vspace{-.3cm} 
$1+ax+bx^2+x^3$ separates the coefficient space\linebreak  

\vspace{-.45cm} 
into regions according to the number of real roots.} 
\end{minipage} 
\hspace{.3in}
\begin{minipage}[t]{2.2in} 
{\scriptsize {\sc Figure 2}: Corresponding slices of $\sip_3$, 
$\rs_3$,\linebreak

\vspace{-.3cm} 
and $\rr_3$: $\sip_3$ is in black, $\sip_3\!\subsetneqq\!
\rs_3\!\subsetneqq\!\rr_3$,\linebreak 

\vspace{-.45cm} 
and the complement of $\rr_3$ is white. } 
\end{minipage} 

\medskip 
\noindent
Davenport and Pol\'ya observed earlier \cite{DaPo} that 
log-concave positive sequences 
form a semigroup with respect to the {\em Hadamard product} 
$(\ga_0,\ga_1,\ldots)\cdot (\ga'_0,\ga'_1,\ldots)\!:=\!
(\ga_0\ga'_0,\ga_1\ga'_1,\ldots)$. In particular, it will be fruitful 
to observe later that the image of such sequences under coordinate-wise 
logarithm forms a cone.  

More to the point, via {\em $A$-discriminant theory} 
\cite{GKZ}, we can reinterpret the sets $\sip_k$, $\rs_k$, and $\rr_k$ 
in terms of the complement of an important hypersurface associated 
to $k$. This point of view yields yet another new family of multiplier 
sequences, in some sense dual to $\sip$. \\ 
{\bf Definition 2.} {\em 
We define $\iip_k$ to be the set of those polynomials 
$p(x)=a_0+a_1x+\cdots+a_kx^k$ such that (i) $a_j\!\geq\!0$ for all 
$j$, (ii) $a_0,a_k\!>\!0$, (iii) $p$ has exactly $1$ or $0$ 
real roots according as $k$ is odd or even, (iv) for any polynomial $p^*$  
obtained from $p$ by multiplying any subset of the $a_i$ with $i\!\in\!\{1,
\ldots,k-1\}$ by $-1$, $p^*$ also has maximally many 
\underline{i}maginary roots in the sense of Condition (iii).}   

\noindent 
Note that for $k$ even, any polynomial $p\!\in\!\iip_k$ is positive 
on all of $\R$, and any $p^*$ obtained from $p$ (as in Condition (iv) above) 
is also positive on all of $\R$. \\
{\bf Theorem 2.}  
{\em A positive sequence $\ga\!:=\!(\ga_0,\ldots,\ga_k)$ satisfies 
$T_\ga\!\left(\iip_k\right)\!\subseteq\!\iip_k$ iff

\vspace{-.25cm}
\noindent 
$\ga^k_j\!\leq\!
\left(\frac{\ga_k}{\ga_0}\right)^j$ for all $j\!\in\!\{1,\ldots,k-1\}$.}  

\noindent 
Within the next section, 
we will see how $\si^\geq_k$ and $\ii^\geq_k$ correspond naturally 
to opposite connected components of a particular {\em amoeba} complement. 

\section{Background on Discriminants and Amoebae} 
The first ingredient to proving our main results is the 
following construction:\linebreak 
\scalebox{.94}[1]{Consider the map  
$\Log|\cdot|: (\Cs)^{k+1}\to \R^{k+1}$ sending 
${\bf a}\mapsto\bigl(\log|a_0|,\log|a_1|,\ldots,
\log|a_k|\bigr)$,}\linebreak   
where ${\bf a}=(a_0,a_1,...,a_k)\in 
(\Cs)^{k+1}$. Notice that $\Log|\cdot|$ maps $\R_+^{k+1}$ 
diffeomorphically onto $\R^{k+1}$ where $\R_+$ is the set of all positive 
real numbers. 

\addtocounter{footnote}{1} 
For any polynomial $q\in \C[a_0,...,a_k]$ one defines its 
{\em amoeba} $\amoeba(q)$ as the image of 
the complex algebraic hypersurface\\ 
\mbox{}\hfill 
$H_q:= \left\{{\bf a}=(\left. a_0,\ldots,a_k)\in(\Cs)^{k+1}\; \right| \; 
q({\bf a})=0\right\}$ \hfill\mbox{}\\ 
under $\Log|\cdot|$.
Recall also that the {\em Newton polytope} of $q(x):=\sum_{{\bf \al}\in A} 
c_{\bf \al} x^{\al}$, written $\newt(q)$, is the convex hull of$^1$ of 
$\{\al\in\Z^{k+1} \; | \; c_\al\!\neq\!0\}$, where the notation 
$x^{\al}:=x^{\al_0}_1 \cdots x^{\al_k}_k$
is understood.\footnotetext{i.e., smallest convex set containing...} 
There is a natural $1$-$1$ correspondence between unbounded 
connected components of the complement $\R^{k+1}\setminus \amoeba(q)$ and the 
vertices of $\newt(q)$.  
\begin{lemma} 
\label{lem:gkz} 
\cite[Prop.\ 1.7 \& Cor.\ 1.8, pp.\ 195--196]{GKZ} 
Suppose a polynomial $f \in \C[x_1,\ldots,x_n]$ has 
Newton polytope $P$ and $v$ is a vertex of $P$. Also let 
$C$ denote the closure of the cone of inner normals to $v$. Then there 
is a unique unbounded 
connected component $\Gamma$ of the complement to $\amoeba(f)$ 
containing a translate of the cone $C$. \qed 
\end{lemma}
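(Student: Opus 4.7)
The plan is a monomial-domination argument. Writing $f(\mathbf{x})=\sum_{\alpha\in A}c_\alpha\mathbf{x}^\alpha$ with $A\subset\Z^n$ the support of $f$, I would exploit that for $\mathbf{y}=\Log|\mathbf{x}|$ one has $|c_\alpha\mathbf{x}^\alpha|=|c_\alpha|\exp\langle\alpha,\mathbf{y}\rangle$. Hence the single term $c_v\mathbf{x}^v$ strictly dominates the sum of the remaining terms of $f$ as soon as $\langle v-\alpha,\mathbf{y}\rangle>\log(|A|\cdot|c_\alpha/c_v|)$ for every $\alpha\in A\setminus\{v\}$; on such $\mathbf{y}$ the polynomial $f$ cannot vanish, so the corresponding $\mathbf{x}$ lies outside $\amoeba(f)$.

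Next I would convert this into a cone translate. Under the orientation convention making $v$ the unique maximizer of $\langle\cdot,\mathbf{w}\rangle$ for $\mathbf{w}\in\mathrm{int}(C)$ (absorbed, if necessary, into replacing $C$ by $-C$), each inequality $\langle v-\alpha,\mathbf{w}\rangle>0$ defines a half-space containing $\mathrm{int}(C)$. The crucial combinatorial point is that because $v$ is a vertex of $P=\newt(f)$, the vectors $\{v-\alpha\}_{\alpha\in A\setminus\{v\}}$ positively span the dual cone $C^\vee$. Consequently the finitely many shifted half-spaces $\{\mathbf{y}:\langle v-\alpha,\mathbf{y}\rangle>\log(|A|\cdot|c_\alpha/c_v|)\}$ intersect in a translate $C+\mathbf{y}_0$ of the entire cone, not merely in a proper sub-cone. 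By the previous step this translate lies in $\R^n\setminus\amoeba(f)$; being convex and unbounded, it is contained in a single unbounded connected component $\Gamma$, establishing existence.

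For uniqueness, if another unbounded component $\Gamma'$ contained a translate $C+\mathbf{y}_0'$, then for any $\mathbf{w}\in\mathrm{int}(C)$ the point $\mathbf{y}_0'+t\mathbf{w}$ would eventually lie in $C+\mathbf{y}_0$ (since $(\mathbf{y}_0'-\mathbf{y}_0)+t\mathbf{w}\in C$ once $t$ is large), forcing $\Gamma'=\Gamma$. The main obstacle is the combinatorial claim in the middle paragraph, namely that the dominance half-spaces fit together to yield a translate of the \emph{full} cone $C$ rather than a smaller cone; this is exactly the geometric content of $v$ being an extreme point of $P$ and is the heart of the argument. Everything else is either a routine growth estimate or a convexity manipulation.
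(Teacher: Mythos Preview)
The paper does not give its own proof of this lemma: the statement is quoted from \cite[Prop.\ 1.7 \& Cor.\ 1.8]{GKZ} and closed immediately with a \qed, so there is no in-paper argument to compare against. Your monomial-domination outline is the standard route to this fact and is essentially correct.

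Two small points of phrasing, neither a genuine gap. First, the intersection of the shifted half-spaces $\{\mathbf{y}:\langle v-\alpha,\mathbf{y}\rangle>d_\alpha\}$ need not be \emph{exactly} a translate of $C$; it is a non-empty polyhedron whose recession cone equals $C$, and hence contains $p+C$ for any of its points $p$ --- which is all you need. (Non-emptiness follows by pushing any interior direction $w\in\mathrm{int}(C)$ far enough, as you implicitly use in the uniqueness step.) Second, your parenthetical about possibly replacing $C$ by $-C$ is the right move: under the usual inner-normal convention the vertex $v$ \emph{minimizes} $\langle\cdot,w\rangle$ over $P$, whereas the domination region you construct is governed by the outer-normal cone. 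The paper itself identifies $C$ with the recession cone of $\Gamma$ just after the lemma, so the intended sign is the one making $v$ the maximizer, and your remark covers this.
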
 

\noindent 
The cone $C$ above is also called the {\em recession cone} of $\Gamma$, since 
it consists of all translations $y\!\in\!\Rn$ with $y+\Gamma\subseteq\Gamma$. 

Let $\Delta_k$ denote the discriminant of the family of polynomials 
$a_0+\cdots+a_kx^k$, i.e., $\Delta_k\!\in\!\Z[a_0,\ldots,a_k]$ is the 
unique (up to sign) irreducible polynomial such that 
$a_0+\cdots+a_kx^k$ has a root of multiplicity $>\!1$ implies 
that $\Delta_k(a_0,\ldots,a_k)\!=\!0$. For instance, 
$\Delta_3:=-27a^2_0a^2_3+18a_0a_1a_2a_3+a^2_1a^2_2-4a_0a^3_2-4a^3_1a_3$.  
More generally, $\Delta_k$ can be computed using a number of arithmetic 
operations polynomial in $k$ (via a\linebreak 
\scalebox{.95}[1]{standard formula involving a $(2k-1)\times (2k-1)$ 
determinant), and is the special case}\linebreak 
\scalebox{.94}[1]{$A\!=\!\{0,\ldots,k\}$ of an {\em $A$-discriminant} 
(see \cite[Ch.\ 9 \& 12]{GKZ} for further background).}  

\vbox{
Amoebae of $A$-discriminants have a more refined structure.  
For example, the boundary of $\amoeba(\Delta_k)$ is contained in the image of 
the real part $H^\R_{\Delta_k}$ of the complex algebraic hypersurface 
$H_{\Delta_k}$ under $\Log|\cdot|$ (see Figure 3 below).  
The latter fact  motivates the following 
definition. 
\addtocounter{dfn}{1} 
\begin{dfn} 
For a complex algebraic hypersurface 
$H_q\subset \C^{k+1}$ given by\linebreak 
$q(a_0,a_1,...,a_k)=0$ we define its {\em 
complete reflection} 
$H^\dagger_q$ as the union of the $2^{k+1}$ hypersurfaces given by $q(\pm a_o, 
\pm a_1,..., \pm a_k)=0$ for all $2^{k+1}$ possible choices of signs of 
coordinates (see, e.g., Figure 4 below). 
\end{dfn} 
\begin{minipage}[t]{2.4in} 
\mbox{}\hspace{-.5cm}\epsfig{file=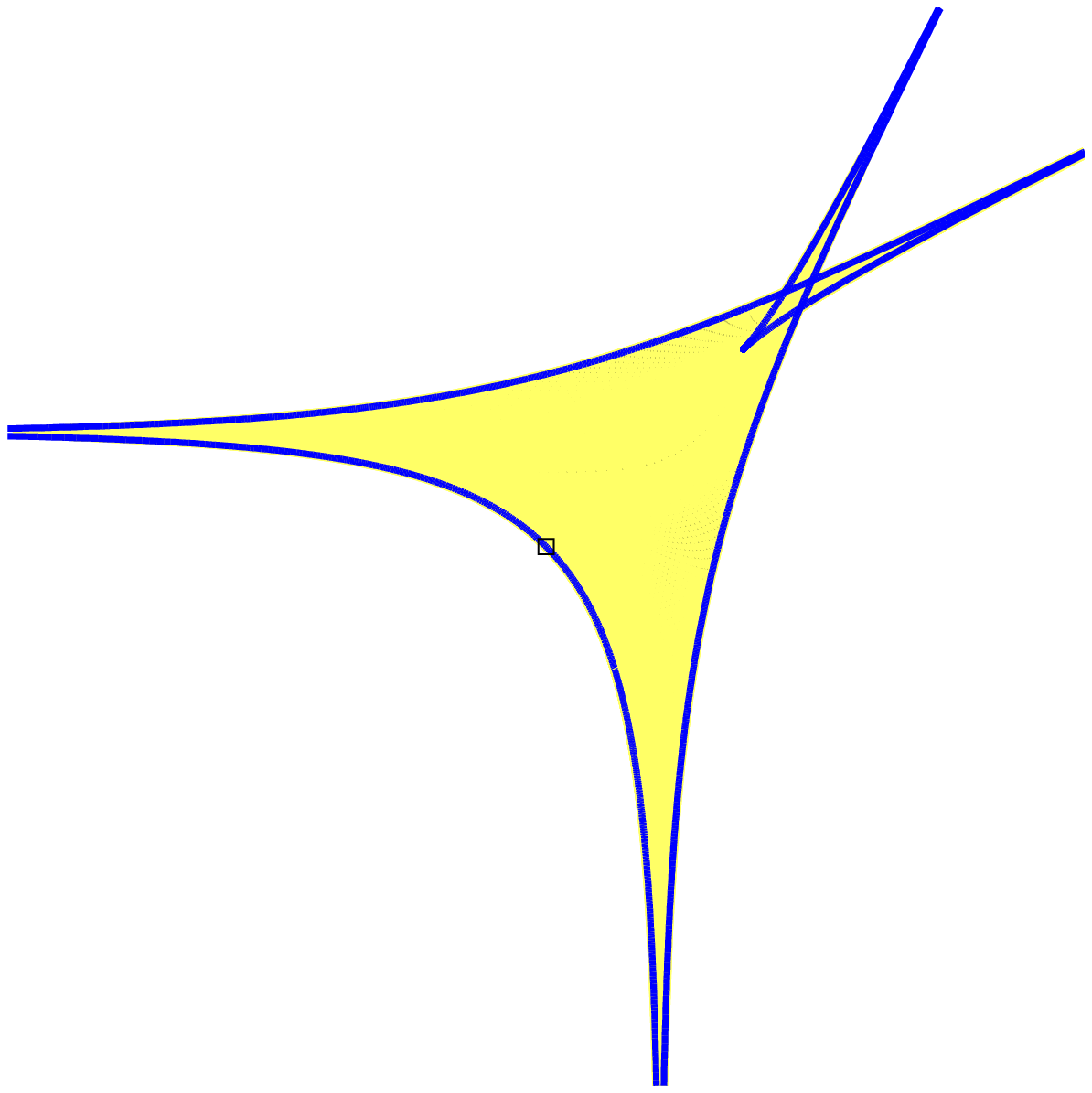,height=2in}

\vspace{-.4cm} 
{\scriptsize {\sc Figure 3:} The amoeba of the specialized cubic\linebreak 

\vspace{-.3cm}
discriminant $\Delta_3(1,a,b,1)$ (in yellow), and the\linebreak 

\vspace{-.45cm}
image of $H^\R_{\Delta_3(1,a,b,1)}$ under $\Log|\cdot|$ (in blue).}
\end{minipage} \hspace{.5cm} 
\begin{minipage}[t]{2.2in} 
\epsfig{file=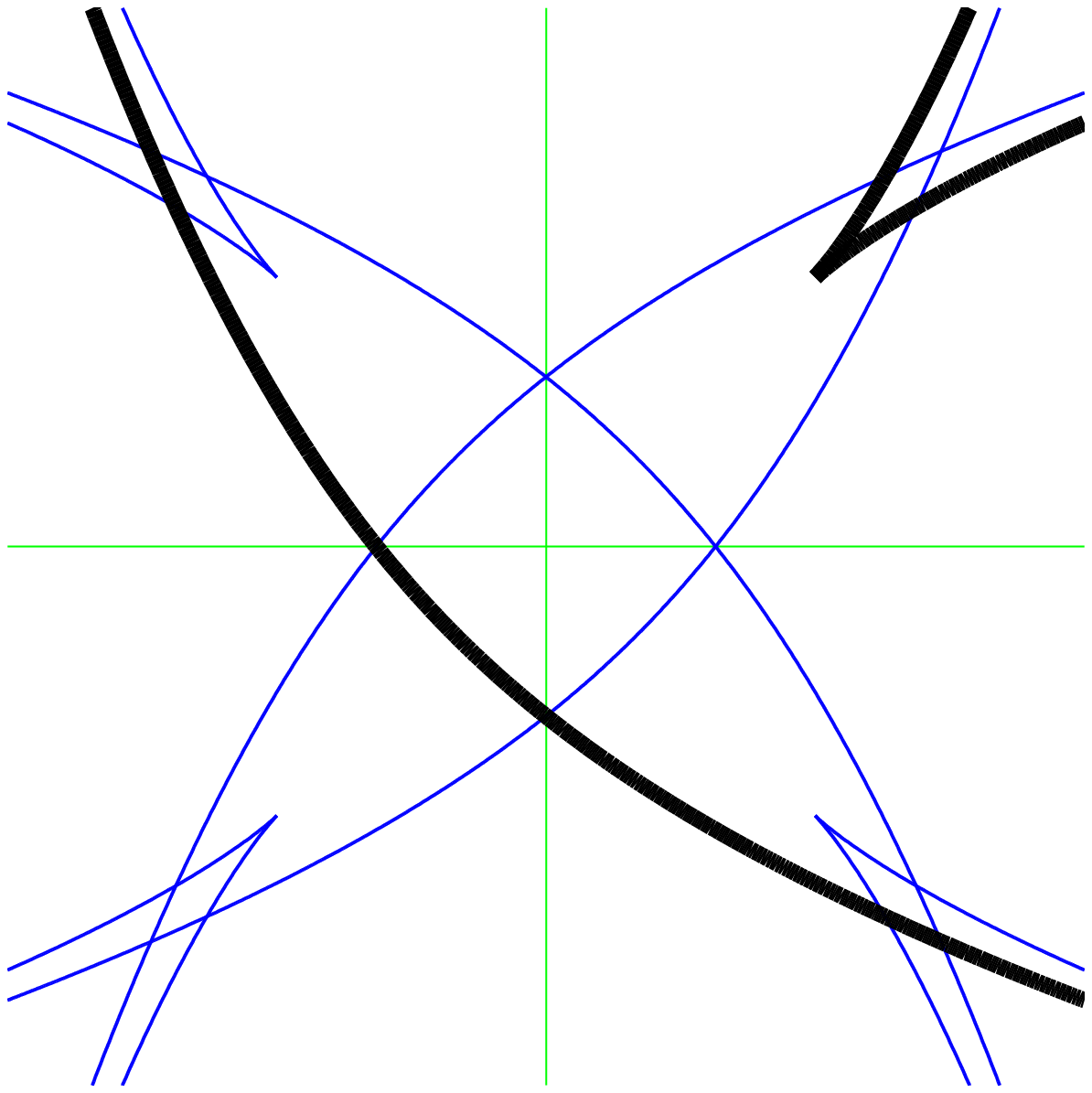,height=2in} 

\vspace{-.4cm} 
{\scriptsize {\sc Figure 4:} The real part of the discriminant\linebreak

\vspace{-.3cm}
variety of the family $1+ax+bx^2+x^3$ (bold)\linebreak

\vspace{-.45cm}
and its sign flips, i.e., $H^\dagger_{\Delta_3(1,a,b,1)}$.}
\end{minipage}} 

Consider the restriction of the real part 
$(H^\dagger_q)^\R$ of $H^\dagger_q$ to $\R_+^{k+1}$. Notice that 
by the above remark each connected component of $\R_+^{k+1}\setminus 
(H^\dagger_q)^\R $ is mapped  by $\Log|\cdot|$ diffeomorphically either 
onto a connected component of the complement $\R^{k+1}\setminus \amoeba(q)$ 
or onto $\amoeba(q)$ itself. One thus sees that $\amoeba(q)$ is the union of 
the images of some number of the latter connected components. 

Returning to $\Delta_k$, it is well known (see, e.g., \cite[pg.\ 271]{GKZ}) 
that $\Delta_k$ has the two homogeneities:\\ 
\mbox{}\hfill $\Delta_k(\lambda a_0,\lambda a_1,\lambda a_2, \ldots,\lambda
a_k)=\lambda^{2(k-1)}\Delta_k({\bf a})$\hfill\mbox{}\\
and\\
\mbox{}\hfill $\Delta_k(a_0,\lambda a_1,\lambda^2 a_2, \ldots, \lambda^k
a_k)=\lambda^{k(k-1)}\Delta_k({\bf a})$.\hfill \mbox{}\\
This immediately implies that $\newt(\Delta_k)$ 
has codimension at least $2$. In fact, the codimension is exactly $2$, 
and it is then easy to see that $\amoeba(\Delta_k)$ is an $\R^2$-bundle 
over a base that is an amoeba of smaller dimension. In particular, one 
can take the base to be the amoeba of $\Delta_k(1,a_1,\ldots,a_{k-1},1)$, 
thus explaining why our illustrations for $k\!=\!3$ are in the plane, as 
opposed to $\R^4$. 

There is also a combinatorial formula for the monomials in $\Delta_k$  
with exponents corresponding to vertices of $\newt(\Delta_k)$ (see 
\cite[pgs.\ 300 \& 302]{GKZ}). Namely, each such
vertex monomial corresponds to
a unique subdivision of the line segment $[0,k]$ into a collection of
segments $\{[0,k_1],[k_1,k_2],\ldots,[k_m,k]\}$, with 
integers $0<k_1<k_2<\ldots< k_m<k$. In particular, the finest subdivision
$\{[0,1],\ldots,[k-1,k]\}$ of $[0,k]$ into unit intervals is
associated with the monomial\\
\mbox{}\hfill 
$\pm a_1^2a_2^2\cdots a_{k-1}^2=\pm(a_1a_2\cdots a_{k-1})^2$,\hfill (1)
\hfill\mbox{}\\
whereas the second finest subdivisions, having one segment $[l-1,l+1]$ of
length two and all other
segments of unit length, correspond to the monomials\\
\mbox{}\hfill$\pm 4\,a_{l-1}a_l^{-2}a_{l+1}(a_1a_2\cdots a_{k-1})^2\,,\quad
l\in\{1,\ldots,k-1\}$.\hfill (2)\hfill\mbox{}\\ 
Moreover, thanks to Lemma \ref{lem:gkz}, we obtain a trinity of associations 
(see also Figure 5 below): \\
\begin{picture}(200,160)(0,-50)
\put(30,40){vertex monomials of $\Delta_k$} 
\put(75,55){\begin{rotate}{45}\scalebox{2}[1]{$\pmb{\longleftrightarrow}$}
\end{rotate}}  
\put(100,100){certain unbounded connected}
\put(230,80){\begin{rotate}{-45}\scalebox{2}[1]
{$\pmb{\longleftrightarrow}$}\end{rotate}}  
\put(90,90){components of the complement of $\amoeba(\Delta_k)$} 
\put(200,40){triangulations of $\{0,\ldots,k\}$} 
\put(140,40){\scalebox{2}[1]{$\pmb{\longleftrightarrow}$}} 
\end{picture} 

\vspace{-1.1in}
Combinatorially $\newt(\Delta_k)$ is a cube of
dimension $k-1$ and the monomials (2) represent the vertices
$v_0+e_{l-1}-2e_l+e_{l+1}$ neighboring the vertex $v_0=(0,2,2,\ldots,2,0)$
corresponding to the monomial (1).

\subsection{Archimedean Newton Polygons} 
An arguably more direct association between polynomials of degree $k$ 
and subdivisions of the point set $\{0,\ldots,k\}$ can be 
obtained via the {\em Archimedean Newton polygon}, which dates 
back to work of Ostrowski in the 1940s \cite[pp.\ 106 \& 132]{ostrowski}. 
This particular kind of Newton polygon further  
elucidates the connection between Theorems 1 and 2, and 
we use the appelation ``Archimedean'' 
to complement the {\em non-Archimedean} Newton
polygons coming from number theory and tropical geometry. 
\begin{dfn}
Given any polynomial $f(x)\!=\!a_0+a_1x+\cdots+a_kx^k$, its
{\em Archimedean Newton polygon}, written $\anewt(f)$, is the
convex hull of the finite point set $\{(i,-\log|a_i|)\; | \;
i\in\{0,\ldots,k\}\}$. We also call any edge of $\anewt(f)$ a
{\em lower} edge if it has an inner normal with positive last coordinate.
\end{dfn}

One can observe experimentally that there is a deep correlation between the
slopes of the lower edges of $\anewt(f)$ and the absolute values of the
roots of $f$.\linebreak
\scalebox{.98}[1]{In particular, paraphrasing in more modern language, 
Ostrowski proved remarkable}\linebreak 
\scalebox{.93}[1]{explicit bounds revealing how the slopes of the lower edges 
of $\anewt(f)$ approximate}\linebreak 
\scalebox{.95}[1]{the negatives of the logs of the norms of the 
roots of $f$ \cite[pp.\ 106 \& 132]{ostrowski}.}  

Even more directly, one notes that the lower hull of $\anewt(f)$ 
naturally associates, via orthogonal projection onto the first coordinate,  
a triangulation of $\{0,\ldots,k\}$ to $f$. 
In particular, it is easy to derive that the strict 
log-concavity\footnote{Strict
log-concavity for $(\ga_0,\ldots,\ga_k)$ simply means that
$\ga^2_j>\ga_{j-1}\ga_{j+1}$ for all $j\in\{1,\ldots,k-1\}$.}
of the sequence of coefficients of $f$ is nothing more than the condition that
$\anewt(f)$ have exactly $k-1$ lower edges. However, unless $\anewt(f)$ 
is sufficiently ``bowed'', a degree $k$ polynomial $f$ having 
$\anewt(f)$ with $k-1$ edges need {\em not} correspond to a point in 
the corresponding component $\Gamma$ of the complement of $\amoeba(\Delta_k)$. 

\vspace{.15cm}
\noindent
\begin{picture}(200,220)(0,-37)
\put(-40,0){\epsfig{file=cubicdiscamoeba.eps,height=2.6in}}
\put(140,160){\epsfig{file=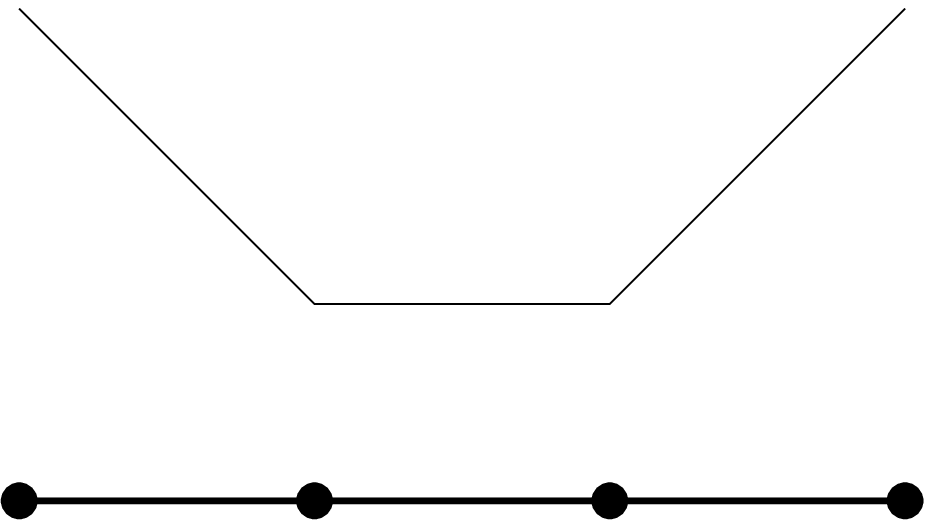,height=.25in}}  
\put(50,140){\epsfig{file=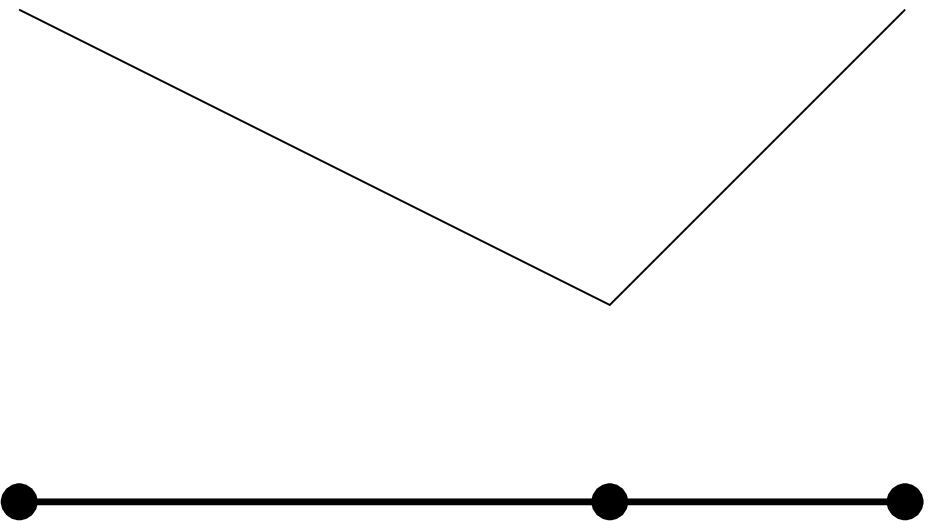,height=.3in}}  
\put(120,70){\epsfig{file=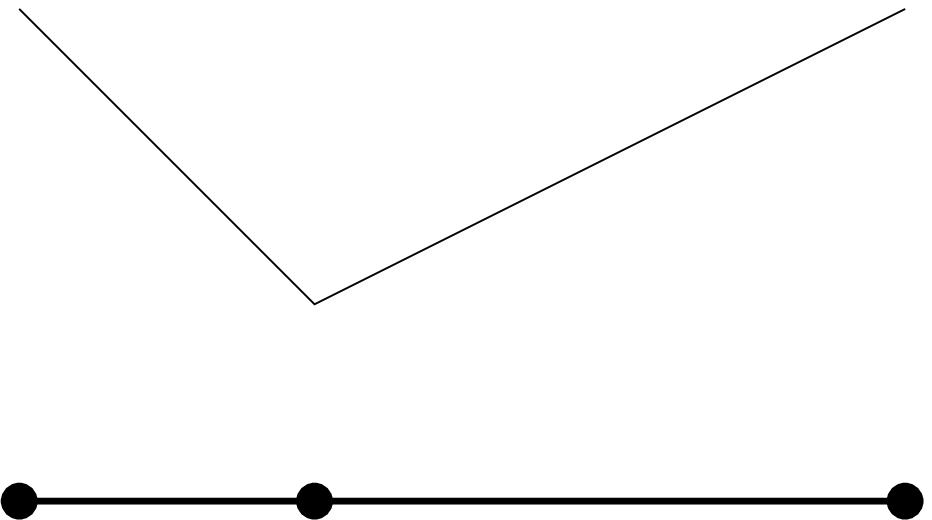,height=.3in}}  
\put(30,60){\epsfig{file=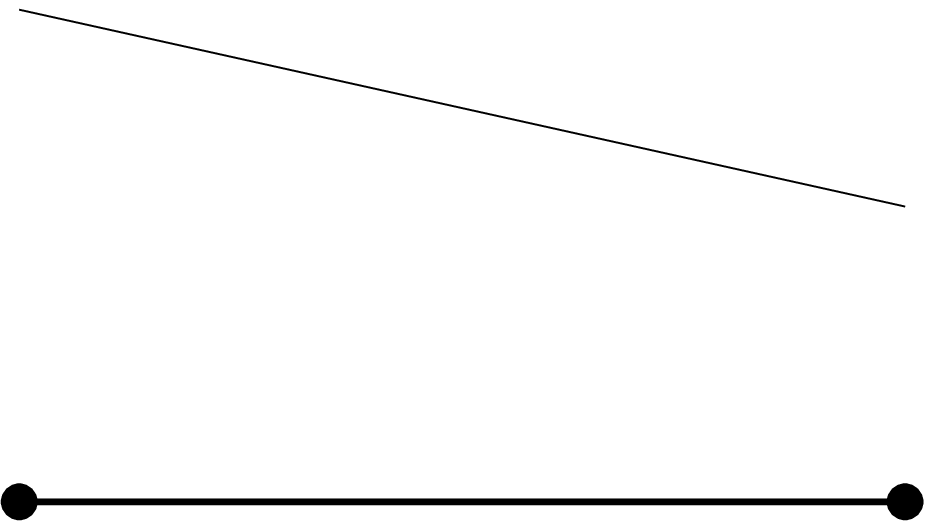,height=.3in}}  
\put(200,70){\epsfig{file=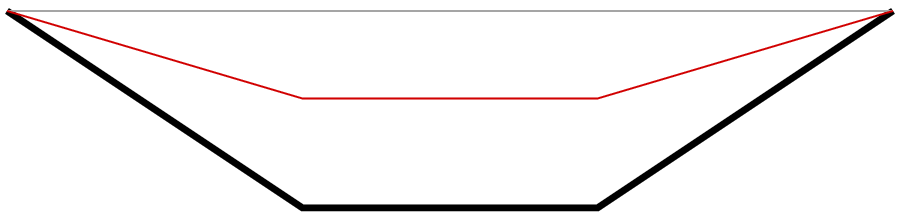,height=.5in}}
\end{picture}

\vspace{-1.8cm}
\noindent
\begin{minipage}[t]{2.4in}
{\scriptsize {\sc Figure 5}: Lower hulls of $\anewt(f)$,\linebreak 

\vspace{-.3cm} 
and associated subdivisions of $\{0,1,2,3\}$,\linebreak

\vspace{-.3cm} 
corresponding to the unbounded components\linebreak

\vspace{-.45cm} 
of the complement of $\amoeba(\Delta_3(1,a,b,1))$.} 
\end{minipage}
\hspace{.3in}
\begin{minipage}[t]{2.2in}
{\scriptsize {\sc Figure 6}: \textcolor{red}{$1+2.9x+2.9x^2+x^3$} does not 
lie\linebreak 

\vspace{-.3cm}
in the upper right component of Figure 5,\linebreak

\vspace{-.3cm}
but $\pmb{1+9x+9x^2+x^3}$ (with a more ``bowed''\linebreak 
  
\vspace{-.45cm}
lower hull for its $\anewt$) does... } 
\end{minipage} 

\noindent 
For instance, $1+2.9x+2.9x^2+x^3$ has only $1$ real root, but 
$1+9x+9x^2+x^3$ has $3$ real roots. 

As we will see in Lemma \ref{lem:push} of the next section, multiplier 
sequences can be used to make the lower hull of an $\anewt(f)$ more 
bowed. Similarly, the sequences
highlighted in Theorem 2 can clearly be identified with those $f$
having $\anewt(f)$ with exactly $1$ lower edge.
Thus, Theorem 1 (resp.\ Theorem 2) appears to relate 
maximal (resp.\ minimal) triangulations with polynomials having maximally
(resp.\ minimally) many real roots. 

\subsection{Supporting Results on Real-Rooted Polynomials} 
\label{sub:real} 
Using the notation $x_l=\log|a_l|$ we see that $\amoeba(\Delta_k)$ is the set 
of vectors $(x_0,...,x_k)\in\R^{k+1}$ such that
the torus $|a_0|=e^{x_0},\ldots, |a_k|=e^{x_k}$ intersects the
discriminant locus $H_{\Delta_k}$.  

\begin{prop}\label{pr:monom}The map $\Log|\cdot|$ is a 
diffeomorphism from $\sip_k$ to the connected component of the complement of 
$\amoeba(\Delta_k)$ corresponding to the monomial (1).
\end{prop}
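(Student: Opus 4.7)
The plan is to show $\Log|\cdot|(\sip_k)=\Gamma$ as subsets of $\R^{k+1}$, where $\Gamma$ denotes the connected component of $\R^{k+1}\setminus \amoeba(\Delta_k)$ corresponding to monomial (1). Since $\Log|\cdot|$ restricts to a global diffeomorphism $\R_+^{k+1}\to\R^{k+1}$, such a set-theoretic equality immediately yields the claimed diffeomorphism. Throughout I interpret $\sip_k$ as the open subset of $\R_+^{k+1}$ consisting of polynomials with strictly positive coefficients whose $2^{k+1}$ sign-flips each have $k$ simple real roots.

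First I would observe that, since no sign-flip of $p\in\sip_k$ can have a multiple root, $\sip_k\subseteq\R_+^{k+1}\setminus(H^\dagger_{\Delta_k})^\R$; by the observation made immediately before the proposition, every connected component of the latter is mapped diffeomorphically under $\Log|\cdot|$ onto either a connected component of $\R^{k+1}\setminus\amoeba(\Delta_k)$ or onto $\amoeba(\Delta_k)$ itself. Thus $\sip_k$ is a disjoint union of such components, and the proof reduces to identifying each of their images.

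Next I would exhibit a ``deep'' $p_0\in\sip_k$ whose image lies unambiguously in $\Gamma$. A natural choice is $p_0(x)=\prod_{i=1}^{k}(x+C^i)$ for $C$ large: its coefficients satisfy $\log a_j\sim\tfrac{1}{2}(k^2+k-j^2-j)\log C$, a strictly concave quadratic in $j$, so $\Log|\cdot|(p_0)$ sits arbitrarily deep in the cone $\{\eta:2\eta_l\ge\eta_{l-1}+\eta_{l+1},\ l=1,\ldots,k-1\}$. By the neighboring-vertex formula $v_0+e_{l-1}-2e_l+e_{l+1}$ displayed in the text, this cone is exactly (modulo the two homogeneities of $\Delta_k$) the closure of the cone of inner normals of $\newt(\Delta_k)$ at $v_0=(0,2,2,\ldots,2,0)$. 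A direct magnitude estimate on the explicit expansion of $\Delta_k$, showing that for $C$ large the single vertex monomial $(a_1\cdots a_{k-1})^2$ strictly dominates all other contributions uniformly in unimodular $\zeta_0,\ldots,\zeta_k$, then gives $\Delta_k(\zeta_0 a_0,\ldots,\zeta_k a_k)\neq 0$ for every such $\zeta$; hence $\Log|\cdot|(p_0)$ lies outside $\amoeba(\Delta_k)$, and by Lemma \ref{lem:gkz} it lies in $\Gamma$.

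To conclude $\Log|\cdot|(\sip_k)=\Gamma$, I would finish by a connectedness argument. The open set $U:=\Log|\cdot|^{-1}(\Gamma)\cap\R_+^{k+1}$ is connected (diffeomorphic to $\Gamma$) and lies in $\R_+^{k+1}\setminus(H^\dagger_{\Delta_k})^\R$ (because $\Gamma\cap\amoeba(\Delta_k)=\emptyset$), so the ``signature'' that records the number of real roots of each of the $2^{k+1}$ sign-flips is locally constant on $U$; its value at $p_0\in U$ is $(k,\ldots,k)$, so $U\subseteq\sip_k$. Conversely, each component $V$ of $\sip_k$ has constant signature $(k,\ldots,k)$ and maps diffeomorphically onto either some component of the amoeba complement or onto $\amoeba(\Delta_k)$; but only $\Gamma$ can carry the all-real signature, as one checks by translating any candidate point deep along the recession-cone direction used above, so $V\subseteq U$, giving $\sip_k=U$. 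The main obstacle is precisely the uniform magnitude estimate: sign-independent real-rootedness controls only the $2^{k+1}$ real points of the complex torus fiber, whereas avoiding $\amoeba(\Delta_k)$ demands the entire $(k+1)$-torus miss $H_{\Delta_k}$, and bridging this gap at a single deep $p_0$ requires the vertex monomial (1) to swamp all other monomials of $\Delta_k$ by a large numerical factor; once this is done, the rest of the argument is purely topological.
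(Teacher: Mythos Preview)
Your overall plan---identify $\sip_k$ with a union of components of $\R_+^{k+1}\setminus(H^\dagger_{\Delta_k})^\R$, produce an anchor point in $\sip_k\cap\Log^{-1}(\Gamma)$, and conclude---matches the paper's. The serious gap is in your converse inclusion $\sip_k\subseteq U$. Your justification, ``only $\Gamma$ can carry the all-real signature, as one checks by translating any candidate point deep along the recession-cone direction used above,'' is either circular or incomplete. If you translate a point of a hypothetical second component $V\subseteq\sip_k$ along the recession direction of $\Gamma$, you need the translate to remain in $\sip_k$; but that is precisely Theorem~1, which is proved \emph{using} this proposition. If instead you mean to translate along the recession cone $C'$ of the putative target $\Gamma'\neq\Gamma$ and argue one eventually leaves $\sip_k$, that requires identifying $C'$ and also disposing of the case that $V$ maps onto the amoeba itself rather than onto a complement component---neither step is in your write-up. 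The paper sidesteps this entirely by proving directly (Lemma~3) that $\sip_k$ is connected, via an elementary fibration $\sip_k\to\sip_{k-1}$ with contractible one-dimensional fibers; once connectedness is known, a single anchor point suffices.

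Two smaller issues. You assert that the signature at $p_0=\prod_{i=1}^k(x+C^i)$ is $(k,\ldots,k)$, i.e., that every sign-flip of $p_0$ has $k$ simple real roots, but you never prove this: knowing $\Log|\cdot|(p_0)\notin\amoeba(\Delta_k)$ only excludes \emph{multiple} roots, not complex-conjugate pairs. The paper devotes Lemma~2 to exactly this point, arguing that for well-separated roots the sign-flips of coefficients approximately correspond to sign-flips of roots and hence remain real. Your $p_0$ is amenable to the same argument, but it is not automatic. Finally, your ``direct magnitude estimate'' showing the vertex monomial $(a_1\cdots a_{k-1})^2$ dominates all other terms of $\Delta_k$ uniformly over the torus is harder than you suggest, since $\Delta_k$ has exponentially many monomials with no easy uniform bound on their coefficients; the paper instead identifies the recession cone of $\Gamma$ via the Ronkin-function and spine machinery (Lemma~4) and then invokes Lemma~1 to place the anchor point in $\Gamma$, never estimating $\Delta_k$ term by term.
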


\noindent 
The proof of this proposition is based on several additional statements. 
Along the way, we will also see some more examples of sign-independently 
real-rooted polynomials. 

First consider the vector $s\in\mathbb{N}^{k-1}$ given by\\
\mbox{}\hfill $s_j=\left(\left|\frac{k}{2}-j\right|+1\right) + 
\left(\left|\frac{k}{2}-j\right|+2\right) + \cdots + \frac{k}{2},
\quad  j\in\{1,\ldots,k-1\}$,\hfill\mbox{}\\
for $k$ even, and by\\
\mbox{}\hfill 
$s_j=\left(j-\frac{k-1}{2}\right) + \left(j+1-\frac{k-1}{2}\right) + \cdots 
+ \frac{k-1}{2}, \quad j\in\{1,\ldots, k-1\}$,\hfill\mbox{}\\
for $k$ odd.

The first few instances of $s$ are $(1)$ for $k=2$; $(1,1)$ for $k=3$;
$(2,3,2)$ for $k=4$; $(2,3,3,2)$ for $k=5$; $(3,5,6,5,3)$ for $k=6$;
$(3,5,6,6,5,3)$ for $k=7$;\linebreak 
\scalebox{.93}[1]{$(4,7,9,10,9,7,4)$ for $k=8$; 
$(4,7,9,10,10,9,7,4)$ for $k=9$;  and  $(5,9,12,14,15,14,12,9,5)$}\linebreak 
for $k=10$. 

\begin{lemma}
\label{lem:push} 
The polynomial\\
\mbox{}\hfill $p_k(x) = 1 + \lambda^{s_1}x + \lambda^{s_2}x^2 + \cdots +
\lambda^{s_{k-1}}x^{k-1} + x^k$\hfill\mbox{}\\
of degree $k$ is sign-independently real-rooted for any sufficiently large 
value of the positive real parameter $\lambda$.
\end{lemma}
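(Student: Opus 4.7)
\medskip
\noindent
\textbf{Plan of proof.} The strategy is to use the Archimedean Newton polygon. First I would verify from the explicit formulas that the sequence $(s_0,s_1,\ldots,s_k)$ (with the convention $s_0\!=\!s_k\!=\!0$) has uniformly negative second differences; direct computation gives $2s_j-s_{j-1}-s_{j+1}\!\in\!\{1,2\}$ for all $j\!\in\!\{1,\ldots,k-1\}$. Consequently the differences $d_j:=s_j-s_{j-1}$, for $j\!\in\!\{1,\ldots,k\}$, form $k$ \emph{distinct} (integer) values, so every one of the $k+1$ points $(j,-s_j\log\lambda)$ is a vertex of the lower hull of $\anewt(p_k)$, and this hull has exactly $k$ edges of pairwise distinct slopes. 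Since sign flips of coefficients do not alter absolute values, the same is true for every polynomial $p_k^*$ obtained from $p_k$ by flipping signs of $a_1,\ldots,a_{k-1}$.

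Next, for each $j\!\in\!\{1,\ldots,k\}$, I would carry out the rescaling $x=\lambda^{-d_j}y$ and divide through by the common power $\lambda^{js_{j-1}-(j-1)s_j}$. A direct computation using the concavity of $(s_j)$ shows that in this normalization the terms $a_{j-1}x^{j-1}$ and $a_jx^j$ are the unique two whose $\lambda$-exponent is maximal, while every other term is smaller by a factor of the form $\lambda^{-\ell}$ with $\ell\!\geq\!1$. Hence, uniformly on compact subsets of $\C$,
\[
\lambda^{-(js_{j-1}-(j-1)s_j)}\,p_k^*(\lambda^{-d_j}y)\;\longrightarrow\;y^{j-1}\bigl(\sigma_{j-1}+\sigma_j y\bigr)\quad\text{as }\lambda\to\infty,
\]
where $\sigma_i\!\in\!\{\pm 1\}$ records the sign chosen for $a_i$ (with $\sigma_0=\sigma_k=1$). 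The limit has a simple nonzero root at $y^*\!=\!-\sigma_{j-1}/\sigma_j\!\in\!\{+1,-1\}$, together with the zero root of multiplicity $j-1$ which will absorb the roots coming from the smaller scales.

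Then I would invoke the implicit function theorem (equivalently, a Rouch\'e argument on a small disc around $y^*$, with $1/\lambda$ as the perturbation parameter) to conclude that for $\lambda$ large enough there is a simple \emph{real} root of $p_k^*$ of the form $x\!\sim\!y^*\lambda^{-d_j}$. Letting $j$ range over $\{1,\ldots,k\}$ produces $k$ roots of $p_k^*$ with $k$ pairwise distinct asymptotic magnitudes $\lambda^{-d_j}$; since $\Deg p_k^*\!=\!k$, these exhaust the roots, and all of them are real. Taking the maximum of the finitely many thresholds $\lambda_0(j,\sigma)$ over the $k\!\cdot\!2^{k-1}$ pairs yields a uniform $\lambda_0$ beyond which \emph{every} sign-flip variant $p_k^*$ is real-rooted, which is the definition of sign-independent real-rootedness.

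\textbf{Main obstacle.} The combinatorial setup is straightforward, and the uniformity over the finite collection of sign patterns is automatic once each individual case is handled. The only real technical step is verifying, with explicit quantitative bounds, that the implicit function (or Rouch\'e) argument actually locates a simple real root of $p_k^*$ near $y^*\lambda^{-d_j}$ for $\lambda$ large --- i.e., controlling the lower-order terms $\lambda^{-\ell}y^i$ on a compact disc around $y^*$ away from $0$. This amounts to the (quantitative) Ostrowski-style observation that a sufficiently ``bowed'' Archimedean Newton polygon forces each lower edge of length one to contribute exactly one root whose log-modulus is close to the slope of that edge; the rest is bookkeeping.
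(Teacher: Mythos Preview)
Your argument is correct and complete in outline; the edge-by-edge rescaling together with Rouch\'e (or the implicit function theorem applied to the real polynomial) does exactly what you say, and the finiteness of the sign patterns makes the uniformity in $\lambda$ automatic.

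Your route differs from the paper's. The paper does not work edge by edge on the Archimedean Newton polygon; instead it writes down a single comparison polynomial $q_k(x)=\prod_i(x+\lambda^{e_i})$ with roots at consecutive integer (or half-integer) powers of $\lambda$, asserts that the coefficients of $q_k$ agree with those of $p_k$ to leading order in $\lambda$, and then invokes continuity of roots: since the roots of $q_k$ are real and of pairwise distinct magnitudes, the small perturbation to $p_k$ cannot create a conjugate pair. Sign flips are handled by flipping the signs of the roots of $q_k$ and arguing that the resulting polynomials are close to the corresponding sign-flipped $p_k^*$. Your approach is more self-contained and more explicitly quantitative: you never need to verify that $p_k$ is globally close to some auxiliary $q_k$, and your treatment of sign flips---observing that the two-term limit $y^{j-1}(\sigma_{j-1}+\sigma_j y)$ always has a simple real nonzero root---is cleaner than the paper's somewhat informal correspondence between sign changes of roots and sign changes of coefficients. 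Conversely, the paper's argument is shorter to state and makes the connection to the exponents $s_j$ (as approximate elementary symmetric functions of the $\lambda^{e_i}$) more transparent. Both arguments encode the same underlying fact: strict concavity of $(s_j)$ forces the $k$ roots to separate onto $k$ distinct scales $\lambda^{-d_j}$.
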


\begin{proof}

This follows from the fact that for large $\lambda$ the polynomial $p_k$ has 
coefficients approaching the polynomial $q_k$ given by:\\
\mbox{}\hfill 
$q_k(x) = (x+\lambda^{-k/2})(x+\lambda^{1-k/2})\cdots(x+\lambda^{k/2})$
\hfill\mbox{}\\
if $k$ is even, and by\\
\mbox{}\hfill $ q_k(x) =
(x+\lambda^{-(k-1)/2})(x+\lambda^{1-(k-1)/2})\cdots(x+\lambda^{(k-1)/2})$
\hfill\mbox{}\\ 
if $k$ is odd.

Indeed, in order to see that $p_k$ is real-rooted for large positive 
$\lambda$, one observes that the
roots of $q_k$ are all real, and since they are given by distinct powers
of $\lambda$, there are $k$ of different magnitude. Hence, under the small
change of real coefficients that is needed to deform $q_k$ to the original
polynomial $p_k$, the roots remain well apart, and hence cannot form any
conjugate pair of complex roots. Now,  one can easily check that for 
sufficiently large $\la$ changing arbitrarily signs of roots of $q_k$ one 
obtains $2^k$ polynomials close to $2^k$ polynomials obtained from $q_k$ by 
arbitrary sign changes of its coefficients. Thus, any change  of signs of 
some of the
coefficients of $p_k$ just corresponds to an appropriate sign change in
some of the roots of $q_k$, and the preceding argument again shows that
the polynomials are still real-rooted.
\end{proof}

\begin{lemma} 
\label{lem:cont} 
The set $\sip_k$ is fibered over $\sip_{k-1}$ with  
contractible $1$-dimensional fibers. 
\end{lemma}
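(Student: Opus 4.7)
The plan is to take as the fibration map the truncation
\[
\pi:\sip_k\to\R_{k-1}[x],\qquad \pi(a_0+a_1x+\cdots+a_kx^k)=a_0+a_1x+\cdots+a_{k-1}x^{k-1},
\]
and then check that (i)~$\pi(\sip_k)\subseteq\sip_{k-1}$, (ii)~$\pi$ is continuous and surjective (surjectivity being trivial since $\sip_{k-1}\subseteq\sip_k$ via $a_k=0$), and (iii)~each fiber $F_q:=\{a_k\geq 0:q+a_kx^k\in\sip_k\}$ is a closed interval in $[0,\infty)$ containing~$0$.

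The crux of both (i) and (iii) is the following key lemma: if $\tilde r\in\R[x]$ has $\deg\tilde r<k$ and $\tilde r+\alpha x^k,\,\tilde r+\beta x^k\in\rr$ for some $\alpha\leq\beta$, then $\tilde r+sx^k\in\rr$ for every $s\in[\alpha,\beta]$. My approach is to combine closedness of $\rr$ in $\R_k[x]$ with the observation that, along the line $\{\tilde r+sx^k\}_{s\in\R}$, the number of real roots can change only at zeros of the discriminant, where two roots collide. A pair of real roots becoming complex will be excluded by summing the discriminant inequalities afforded by all sign-flip companions --- mimicking the $k=3$ prototype, where the sum $\mathrm{disc}(\tilde r+cx^3)+\mathrm{disc}(\tilde r-cx^3)\geq 0$ collapses to $a^2(b^2-4ac_0)\geq 27c^2c_0^2\geq 0$, instantly yielding real-rootedness of the truncated quadratic $\tilde r=ax^2+bx+c_0$. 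For general $k$ the analogous sum-of-discriminants identity should again prevent any escape of roots from the real axis on $(\alpha,\beta)$.

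Granting the key lemma, (i) and (iii) follow quickly. For (i): given any sign flip $\tilde q$ of $q:=\pi(p)$, the polynomials $\tilde q\pm a_kx^k$ are sign flips of $p\in\sip_k$ and are thus real-rooted, so the lemma (with $\alpha=-a_k,\beta=+a_k$) gives $\tilde q=\tilde q+0\cdot x^k\in\rr$; varying $\tilde q$ shows $q\in\sip_{k-1}$. For (iii): given $a_k^{*}\in F_q$ and $s\in[0,1]$, for each sign flip the polynomial $\tilde q+\epsilon_ksa_k^{*}x^k$ has real-rooted endpoints (at $s=0$ because $\tilde q$ is a sign flip of $q\in\sip_{k-1}$, and at $s=1$ by hypothesis), so the lemma forces $\tilde q+\epsilon_ksa_k^{*}x^k\in\rr$ for all $s\in[0,1]$. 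Hence $F_q$ is star-shaped at $0$; combined with closedness (from closedness of $\sip_k$) and non-emptiness, $F_q$ is a closed contractible interval.

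The main obstacle is proving the key lemma for general $k$. Direct sum-of-discriminants computation becomes combinatorially unwieldy as $k$ grows, so my fallback plan is a topological argument using the amoeba picture set up in Section~2: the semi-algebraic set $\sip_k$ sits inside a single unbounded connected component of $\R^{k+1}\setminus\amoeba(\Delta_k)$, and when translated to $\Log|\cdot|$ coordinates the family $\{\tilde r+sx^k\}$ should intersect the closure of this component in a convex (hence connected) set by the recession-cone description of Lemma~\ref{lem:gkz}. This convexity then gives the interval property directly. A final minor check, omitted here, is that for $q$ in the interior of $\sip_{k-1}$ the upper bound $M_q$ is positive, so that the generic fiber is genuinely $1$-dimensional.
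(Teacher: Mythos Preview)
Your plan is sound in outline, but the ``key lemma'' is the whole content of the proof and you have not actually established it. The $k=3$ discriminant-sum sketch treats only the special case $s=0$ (it shows the midpoint is real-rooted, not every intermediate $s$), and there is no indication of how to push a sum-of-discriminants identity to general~$k$. More seriously, the amoeba fallback is circular: to use convexity of the component $\Gamma_k$ for $\sip_k$, you need $\Log|\sip_k|=\Gamma_k$, which is Proposition~1 --- and Proposition~1 is proved \emph{using} Lemma~\ref{lem:cont}. (Even knowing that $\sip_k$ lands in a single component of the amoeba complement presupposes the connectedness that this lemma is meant to supply.)

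The fix is short, and it is essentially what the paper does. Observe that under the reversal $p(x)\mapsto x^k p(1/x)$ (which preserves real-rootedness), the family $\tilde r(x)+s\,x^k$ becomes $\tilde R(y)+s$ with $\tilde R(y)=y^k\tilde r(1/y)$; so your key lemma reduces to the elementary fact that for a fixed real polynomial $\tilde R$, the set of constants $s$ with $\tilde R+s$ real-rooted is the interval between the largest local minimum and the smallest local maximum of $-\tilde R$. The paper avoids even this reversal by choosing the other end: it fibers $\sip_k\to\sip_{k-1}$ by setting $a_0=0$ (and dividing by $x$) rather than $a_k=0$, and then the contractibility of the fiber follows immediately because the two sign-flips $\pm a_0+a_1x+\cdots+a_kx^k$ are both real-rooted, forcing $[-a_0,a_0]$ (hence $[0,a_0]$) to lie in the admissible interval of constant-term shifts. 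Your approach and the paper's are thus mirror images under $x\mapsto 1/x$; the paper's choice simply makes the interval step a one-liner.
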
 

\begin{proof} Notice that the restriction of $\sip_k$ to the hyperplane 
$a_0=0$ is in obvious $1$-$1$ correspondence with $\sip_{k-1}$ obtained 
by dividing a polynomial $p(x)=a_1x+\cdots+a_kx^k$ from the former set  by the 
variable $x$. To finish the proof we show that for any $p(x)=a_0+a_1x+\cdots
+a_kx^k$ belonging to $\sip_k$ the family of polynomials 
$p_{\tau}=p-a_0\tau,\;\tau\in[0,1]$ belong to $\sip_k$ thus forming 
the required fiber of the projection in question.  Indeed, consider  for any 
real rooted polynomial 
$p(x)=a_0+a_1x+\cdots+a_k x^k$ the family $p_\eps(x)=p(x)+\eps$ where 
$\eps\in \R$. It is obvious that 
$p_\eps(x)$ is real-rooted if and only if $\eps\in [v_{min}, V_{max}]$ where 
$v_{min}$ is the maximal local minimum of $p(x)$ and $V_{max}$ is its minimal 
local maximum. Now take $p\in \sip_k$ and consider its family 
$p_\eps(x)$. Since all the $a_i$ are now nonnegative,  
consider $p_{-}(x)=-a_0+a_1x+\cdots+a_kx^k$ which must also be real-rooted. 
Thus at least for $\eps$ in the interval $[-2a_0,0]$ one has that $p_\eps(x)$ 
is real-rooted. Exactly  the same argument works for all $p_\pm$ obtained 
from $p$ by  arbitrary sign changes of its coefficients proving that the 
family $p-a_0\tau,\; \tau\in [0,1]$ sits inside $\sip_k$. 
\end{proof}

\subsection{Finding Recession Cones} 
Denote by $\Gamma_k$ the connected component of\linebreak $\R^{k+1}\setminus 
\amoeba(\Delta_k)$ corresponding to the monomial (1), and let $C_k$ denote the 
recession cone of $\Gamma_k$. We now prove the following crucial result. 
\begin{lemma} \label{lem:cone}
The cone $C_k$ is given by the inequalities $2x_l\ge x_{l-1}+x_{l+1}$, for 
$l\in\{1,\ldots,k-1\}$.
\end{lemma}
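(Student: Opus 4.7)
The plan is to identify $C_k$ directly via Lemma~\ref{lem:gkz} applied to the vertex $v_0=(0,2,2,\ldots,2,0)$ of $\newt(\Delta_k)$ corresponding to monomial~(1). That lemma describes $C_k$ as the cone attached to $v_0$ in the normal fan of $\newt(\Delta_k)$: concretely, the set of $z\in\R^{k+1}$ for which the linear functional $\langle z,\cdot\rangle$ attains its maximum over $\newt(\Delta_k)$ at $v_0$.

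I would then exploit the combinatorial structure already recorded above: $\newt(\Delta_k)$ is a $(k-1)$-cube, and the $k-1$ vertices adjacent to $v_0$ along edges are exactly $v_0+e_{l-1}-2e_l+e_{l+1}$ for $l\in\{1,\ldots,k-1\}$, corresponding to the monomials in~(2). Since the tangent cone of any convex polytope at a vertex $v$ is generated by the edge vectors $w-v$ with $w$ adjacent to $v$, the normal cone at $v_0$ is cut out by the $k-1$ inequalities $\langle z,\,e_{l-1}-2e_l+e_{l+1}\rangle\le 0$. Written out coordinate-wise these become $x_{l-1}-2x_l+x_{l+1}\le 0$, i.e., $2x_l\ge x_{l-1}+x_{l+1}$, which is the asserted description of $C_k$.

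The one real subtlety is pinning down which orientation of normals the ``inner normal'' language in Lemma~\ref{lem:gkz} selects in this setting. I would settle this by a direct dominance check against the amoeba: on $\Gamma_k$ the monomial $(a_1\cdots a_{k-1})^2$ must beat each neighboring monomial in~(2) in absolute value, and taking logs of that inequality reproduces precisely $x_{l-1}-2x_l+x_{l+1}\le 0$, so the orientation is the one claimed. A reassuring by-product is that the two homogeneities of $\Delta_k$ (which force $\newt(\Delta_k)$ to have codimension $2$) show up automatically as the two-dimensional lineality space of $C_k$ given by the equalities in the system, namely the directions $(1,1,\ldots,1)$ and $(0,1,2,\ldots,k)$. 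After this, no further amoeba-theoretic input is needed; the proof is pure polyhedral combinatorics driven by the monomials~(1) and~(2).
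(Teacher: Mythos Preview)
Your argument is correct and takes a genuinely different route from the paper's own proof. The paper does \emph{not} invoke Lemma~\ref{lem:gkz} directly for this computation; instead it introduces the Ronkin function $N_{\Delta_k}$, cites \cite{PR} to identify $N_{\Delta_k}$ with the affine function $\log|c_v|+\langle v,x\rangle$ on each vertex component, cites \cite{PST} to rule out unbounded components attached to non-vertex lattice points, and then reads off $C_k$ from the corner locus of the tropical polynomial $\max_v(\log|c_v|+\langle v,x\rangle)$, i.e., from the spine $S_{\Delta_k}$ at infinity. The inequalities $2x_l\ge x_{l-1}+x_{l+1}$ emerge by comparing the affine piece at $v_0$ to those at its neighbors---the same comparison you make, but reached through heavier analytic machinery.

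Your approach is more elementary and arguably cleaner: once the paper has recorded (i) the identification of the recession cone with the normal cone (the remark following Lemma~\ref{lem:gkz}), (ii) the combinatorial cube structure of $\newt(\Delta_k)$, and (iii) the explicit edge vectors $e_{l-1}-2e_l+e_{l+1}$ at $v_0$, the lemma is immediate polytope duality at a simple vertex, and Ronkin functions never enter. The paper's route, on the other hand, makes transparent the analytic mechanism (dominance of affine pieces of $N_{\Delta_k}$) and connects to the spine picture used elsewhere in amoeba theory; it also does not lean on the statement, asserted but not proved in the paper, that the normal cone \emph{equals} rather than merely \emph{is contained in} the recession cone. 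One small wording issue in your sketch: the vertex monomial need not dominate its neighbors pointwise on all of $\Gamma_k$, only asymptotically along the recession directions---but since you use this solely to fix an orientation, any single interior ray suffices and the argument stands. It is worth noting that the paper's closing remarks explicitly ask for a proof of Theorem~\ref{th:main} avoiding Ronkin functions; your argument for this lemma is a step in that direction.
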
 
 
\begin{proof}
Recall that for a polynomial $p({\bf z})$ in $n$ complex variables 
${\bf z}=(z_1,...,z_n)$, one defines its {\em Ronkin function} $N_p(x)$,  
in $n$ real variables $\bar{x}=(x_1,\ldots,x_n)$, by the formula
$$\frac{1}{(2\pi i)^n}\int_{{\Log}^{-1}(\bf x)}\log|p({\bf z})|
\frac{dz_1}{z_1}\wedge \cdots \wedge \frac{dz_n}{z_n},$$
where ${\bf x}=(x_1,\ldots,x_n)$. It is known that the Ronkin function is 
convex, and it is affine on each connected component of the complement of the 
amoeba $\amoeba(p)$. Equivalently, $N_p$ is given by the 
integral $$N_p({\bf x})=\frac{1}{(2\pi)^n}\int_{[0,2\pi]^n}\log|p({\bf z})|
d\theta_1 \cdots d\theta_n,$$
where $${\bf z}=\left(e^{x_1+i\theta_1},\ldots, e^{x_n+i\theta_n}
\right)$$ \cite{PT}. As an example, the Ronkin function of a monomial 
$p({\bf z})=az_1^{l_1}\cdots z_n^{l_n}, \; a\neq 0$ 
is given by $$N_p({\bf x})=\log|a|+l_1x_1+\cdots+l_nx_n.$$
From general results proved in 
\cite{PR} one knows that the Ronkin function of $\Delta_k$ is equal to 
$\log|c_v|+\langle
v,x\rangle$ in the component corresponding to a vertex monomial $c_vx^v.$ 

In particular, in the components of the special vertex monomials (1) and
(2), the Ronkin function coincides with  the affine linear functions\\
\mbox{}\hfill $2x_1+\cdots +2x_{k-1}=2\,(x_1+\cdots+x_{l-1})$\hfill 
\mbox{}\\
and\\
\mbox{}\hfill $x_{l-1}-2x_l+x_{l+1}+2\,(x_1+\cdots+x_{k-1})$\hfill\mbox{}\\ 
respectively. Now, by \cite{PST} one knows that the amoeba of 
$\Delta_k(1,a_1,\ldots,a_{k-1},1)$ does
not have any other unbounded connected components to its complement  
other than those corresponding to the vertices of 
$\newt(\Delta_k(1,a_1,\ldots,a_{k-1},1))$. 
Now let $S_{\Delta_k}$ denote the spine (see \cite{PR} for its 
definition) and let $S_\infty$ denote a sufficiently small neighborhood 
of $S_{\Delta_k}$ about infinity. It then follows that $S_\infty$ is 
exactly a neighborhood about infinity of the corner locus 
of the pieceswise linear convex function (or tropical polynomial)\\
\mbox{}\hfill $\max_v\bigl(\log|c_v|+\langle v,x\rangle\bigr)$,\hfill 
\mbox{}\\
where $v$ ranges over the vertices of the Newton polytope of $\Delta_k$.
The unbounded connected components of the complement of the spine 
$S_{\Delta_k}$ 
are convex polyhedral cones where one of the affine linear functions 
dominates all the others, and the closure of such a cone is the recession 
cone of the unbounded connected component of the complement to 
$\amoeba(\Delta_k)$. 
For the special vertex monomial (1)
we obtain in this way that the recession cone $C_k$ of $\Gamma_k$ is given 
by the inequalities: $$ 2\,(x_1+\cdots+x_{k-1}) \ge  
x_{l-1}-2x_l+x_{l+1}+2\,(x_1+\cdots+x_{k-1}),\, 
l\in\{1,\ldots,k-1\}\,,$$ 
or, equivalently, $2x_l\ge  x_{l-1}+x_{l+1}$, for $l\in\{1,\ldots,k-1\}$.  
\end{proof}

We will later need the following refinement of Lemma \ref{lem:cone}
that characterizes the unique translate $C^s_k$ of $C_k$ {\em supporting}
$\Gamma_k$. 
\begin{lemma}
\label{lem:sup} 
The cone $C^s_k$ defined by the inequalities
$2x_l\geq x_{l-1}+x_{l+1}+\log 4$ for all $l\!\in\!\{1,\ldots,k-1\}$
contains $\Gamma_k$, but $y+C^s_k$ does not contain $\Gamma_k$ for 
any $y$ in the interior of $C_k$. 
\end{lemma}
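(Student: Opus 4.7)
The plan is to establish the two parts of Lemma \ref{lem:sup} separately, each leveraging material already developed in the paper.

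For the inclusion $\Gamma_k\subseteq C^s_k$, I combine Proposition \ref{pr:monom} with the Corollary to Theorem \ref{th:main}. Any $x=(x_0,\ldots,x_k)\in\Gamma_k$ arises as $\Log|p|$ for some $p(t)=a_0+a_1t+\cdots+a_kt^k\in\sip_k$ with all $a_i>0$ by Proposition \ref{pr:monom}. The Corollary then yields $a_l^2\geq 4a_{l-1}a_{l+1}$ for every $l\in\{1,\ldots,k-1\}$; taking logarithms gives $2x_l\geq x_{l-1}+x_{l+1}+\log 4$, which is precisely the $l$-th defining inequality of $C^s_k$. So $\Gamma_k\subseteq C^s_k$.

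For sharpness, fix $y$ in the interior of $C_k$, so that $\delta:=\min_l(2y_l-y_{l-1}-y_{l+1})>0$. Unwinding definitions, $x\in y+C^s_k$ iff $2x_l-x_{l-1}-x_{l+1}\geq\log 4+(2y_l-y_{l-1}-y_{l+1})$ for every $l$, and it therefore suffices to exhibit some $l$ and some $x\in\Gamma_k$ with $2x_l-x_{l-1}-x_{l+1}<\log 4+\delta$. The crucial geometric input, already implicit in the proof of Lemma \ref{lem:cone}, is that the hyperplane $W_l:=\{2x_l-x_{l-1}-x_{l+1}=\log 4\}$ is precisely the facet of the spine $S_{\Delta_k}$ separating the $(1)$-component from the adjacent $(2)/l$-component: on those two complement components the Ronkin function of $\Delta_k$ equals $2(x_1+\cdots+x_{k-1})$ and $\log 4+x_{l-1}-2x_l+x_{l+1}+2(x_1+\cdots+x_{k-1})$ respectively, and these affine pieces agree exactly along $W_l$. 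Since the spine lies inside $\amoeba(\Delta_k)$, while the tubular ``tentacle'' of the amoeba straddling $W_l$ thins to zero width at infinity along directions tangent to $W_l$, one obtains a sequence $x^{(n)}\in\Gamma_k$ with $2x^{(n)}_l-x^{(n)}_{l-1}-x^{(n)}_{l+1}\searrow\log 4$; any sufficiently late term then satisfies the required strict inequality and lies outside $y+C^s_k$.

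The main obstacle will be making rigorous the asymptotic-thinning assertion for the amoeba near each $W_l$. One route is to invoke the retraction picture from \cite{PR} together with an explicit asymptotic analysis along $W_l$, exploiting the fact that only the two vertex monomials (1) and (2)/$l$ (which differ by the edge $e_{l-1}-2e_l+e_{l+1}$ of $\newt(\Delta_k)$) dominate the tropical polynomial far out along $W_l$, so that the local picture there is effectively that of a binomial amoeba of bounded width. An alternative, more hands-on route is to construct explicit polynomial families in $\sip_k$ realizing the limit: for the extremal walls $W_1$ and $W_{k-1}$ (exchanged by the reflection $p(t)\leftrightarrow t^kp(1/t)$), a family such as $p^{(\tau)}(t)=1+2(1+\tau)t+t^2+\tau t^3+\cdots$ with sufficiently small higher-order terms can be shown directly (via discriminant computations on each of the $2^{k+1}$ sign flips) to lie in $\sip_k$ and to satisfy $a_1^2/(a_0a_2)\searrow 4$ as $\tau\to 0^+$; the intermediate walls are handled analogously by first reducing to a polynomial supported on $\{l-1,l,l+1\}$ and then perturbing to full degree $k$ while preserving sign-independent real-rootedness.
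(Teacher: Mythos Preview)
Your argument for the inclusion $\Gamma_k\subseteq C^s_k$ is circular as written. You invoke the first assertion of Corollary~1 (that $p\in\sip_k$ forces $a_\nu^2\geq 4a_{\nu-1}a_{\nu+1}$), but in the paper that very assertion is \emph{deduced from} Lemma~\ref{lem:sup}. A clean fix is available: appeal instead to the \emph{second} assertion of Corollary~1, whose proof uses only Lemma~\ref{lem:cont}. From it one knows that the three-term truncation $a_{l-1}x^{l-1}+a_l x^l+a_{l+1}x^{l+1}$ of any $p\in\sip_k$ is real-rooted, and hence $a_l^2\geq 4a_{l-1}a_{l+1}$; taking logarithms then gives exactly the defining inequality of $C^s_k$. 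You should make this reorganization explicit rather than citing the corollary wholesale.

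For the sharpness direction you work much harder than the paper does, and the extra machinery is where your acknowledged gap sits. The paper dispenses with the spine/asymptotic-thinning analysis and with the hand-built perturbation families entirely: it simply observes that each polynomial $x^m(1+x)^2$, for $m\in\{0,\ldots,k-2\}$, lies on a distinct facet of $C^s_k$ (namely the one where $2x_{m+1}=x_m+x_{m+2}+\log 4$) and simultaneously on the boundary of $\Gamma_k$, the latter because lowering the middle coefficient produces non-real roots (this was already noted in the Introduction). Since $C^s_k$ has exactly $k-1$ facets and each is touched by $\partial\Gamma_k$, no translate $y+C^s_k$ with $y$ interior to $C_k$ (which strictly tightens every facet inequality) can still contain $\Gamma_k$. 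Your two proposed routes would, if completed, recover the same boundary contact, but the single-polynomial observation does the job without any asymptotic analysis.
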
 

\noindent 
{\em Proof.} 
First note that each polynomials $x^m(1+x)^2$, for
$m\!\in\!\{0,\ldots,k-2\}$, lies on a unique facet of the 
cone $C^s_k$, and that this cone has exactly $k-1$ facets. 
So to conclude, we need only show that each such polynomial 
lies on the boundary of $\Gamma_k$. However, the last statement 
was already observed in the introduction, during our discussion 
of perturbing middle coefficients. \qed   

\medskip
\noindent 
{\em Proof of Proposition 1.} From our earlier discussion, we know that 
the set $\sip_k$ (if non-empty) consists of some number of connected 
components of the complement $\R^{k+1}\setminus \Delta^\dagger_k$ where 
$\Delta^\dagger_k$ is the reflected 
discriminant of $\Delta_k$ (see, e.g., Figure 4). Indeed, $\sip_k$ is 
the intersection of the set of all degree $k$ real-rooted polynomials 
having only simple zeros with all similar sets 
obtained by all possible sign changes of the coefficients. By Lemmata 
\ref{lem:push} and \ref{lem:cont}   
the set $\sip_k$ is non-empty and connected, so 
$\sip_k$ coincides with a unique 
connected component of $\R^{k+1}\setminus \Delta^\dagger_k$. 

To conclude, we have to show that the 
image of $\sip_k$ under $\Log|\cdot|$ coincides with the component of the 
complement to $\amoeba(\Delta_k)$ corresponding to the monomial (1). We show 
that the vector $s\in\mathbb{N}^{k-1}$ from Lemma \ref{lem:push} is an interior 
point in the recession cone of the unbounded connected component $\Gamma_k$ 
of the complement of the discriminant amoeba corresponding to the 
finest subdivision of $\{0,\ldots,k\}$. Indeed, this recession cone is
defined by the inequalities $2x_j \ge  x_{j-1} + x_{j+1}$,
$j\!\in\!\{1,\ldots,k-1\}$ with the dehomogenizing convention 
$x_0=x_k=0$, thanks to Lemma \ref{lem:cone}. This
means that the coefficients $\lambda^{s_j}$ of the polynomial $p_k$ from
Lemma \ref{lem:push}, for large enough $\lambda$, represent a point in 
$\Gamma_k$. But the polynomial $p_k$ was seen 
to be sign-independently real-rooted for large $\lambda$, and this concludes
the proof.  \qed 

\section{The Proofs of our Main Results} 

\subsection{Theorem 1} 
The proof of the ``only if'' direction is easy, as outlined in the
introduction: If $T_\ga(\sip)\!\subseteq\!\rr$ then we must 
certainly have $T_\ga(x^m(1+x)^2)\!\in\!\rr$ for all $m$, since 
$x^m(1+x)^2\!\in\!\sip$ for all $m$. Thus, $\ga$ must be log-concave. 

The proof of the ``if'' direction is more intricate but now follows  
easily from our preceding development: By Proposition 1 and 
Lemma \ref{lem:cone}, $\Log|\cdot|$ of the set of log-concave 
$\ga=(\ga_0,\ldots,\ga_k)$ is precisely the recession cone $C_k$ of 
$\Gamma_k$, and\linebreak 
$\Log|\cdot| : \sip_k\longrightarrow \Gamma_k$ is a diffeomorphism. 
So any such $\ga$ satisfies $T_\ga\!\left(\sip_k\right)\!\subseteq\!\sip_k$, 
and we are done. \qed 

\subsection{Corollary 1} 
The first part of the Corollary follows immediately from 
Lemma \ref{lem:sup}. The second part follows easily by 
applying Lemma \ref{lem:cont} inductively. \qed 

\subsection{Theorem 2} 
Our proof here will be completely parallel to that of 
Theorem 1, so let us start with some analogues of 
$\Gamma_k$ and $C_k$: First, let us denote by $\Gamma'_k$ the connected 
component of $\R^{k+1}\setminus
\amoeba(\Delta_k)$ corresponding to the trivial (single-celled) subdivision 
of $\{0,\ldots,k\}$. Also let $C'_k$ denote the recession cone of $\Gamma'_k$. 
\begin{lemma} 
\label{lem:back} 
The cone $C'_k$ is given by the inequalities $kx_j\leq j(x_k-x_0)$, for
$j\in\{1,\ldots,k-1\}$. \qed 
\end{lemma}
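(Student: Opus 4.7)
The plan is to mirror, step by step, the proof of Lemma~\ref{lem:cone}, only replacing the finest-subdivision vertex of $\newt(\Delta_k)$ by the trivial-subdivision vertex.

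First, I would identify the vertex monomial of $\Delta_k$ attached to the trivial single-cell subdivision of $\{0,\ldots,k\}$. Since this subdivision has no interior nodes, the corresponding monomial involves only $a_0$ and $a_k$, say $\pm a_0^{p_0}a_k^{p_k}$. The two homogeneities of $\Delta_k$ recalled in Section~2 force $p_0+p_k=2(k-1)$ and $kp_k=k(k-1)$, hence $p_0=p_k=k-1$, so the vertex is $v'_0=(k-1,0,\ldots,0,k-1)$.

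Second, I would locate the $k-1$ vertices of $\newt(\Delta_k)$ adjacent to $v'_0$ in its cube structure. These correspond to the subdivisions $\{[0,j],[j,k]\}$ for $j\in\{1,\ldots,k-1\}$, and the combinatorial GKZ formula~\cite[pp.~300 \& 302]{GKZ}---or, once more, bihomogeneity applied after one knows only $a_0$, $a_j$, $a_k$ can occur---produces the vertex monomial $\pm a_0^{j-1}a_j^{k}a_k^{k-j-1}$; i.e.\ the vertex $v_j$ has coordinates $(j-1,0,\ldots,0,k,0,\ldots,0,k-j-1)$ with the entry $k$ at position $j$. As a sanity check, for $k=3$ this recovers $\pm a_1^3a_3$ and $\pm a_0a_2^3$, both visible in the explicit formula for $\Delta_3$ given earlier.

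Third, I would reuse the Ronkin-function/spine argument of Lemma~\ref{lem:cone} verbatim. By~\cite{PR}, on the component indexed by a vertex $v$ the Ronkin function $N_{\Delta_k}$ coincides with $\log|c_v|+\langle v,x\rangle$, so on $\Gamma'_k$ it equals $(k-1)(x_0+x_k)+\log|c_{v'_0}|$ and on the component across the facet indexed by $j$ it equals $(j-1)x_0+kx_j+(k-j-1)x_k+\log|c_{v_j}|$. By~\cite{PST} the only unbounded components of the complement of $\amoeba(\Delta_k(1,a_1,\ldots,a_{k-1},1))$ are the vertex-components, so $C'_k$ is the closure of the region where $v'_0$'s affine function dominates; since $\newt(\Delta_k)$ is simple at $v'_0$, only the $k-1$ adjacent vertex functions matter, giving
\[
(k-1)(x_0+x_k)\;\ge\;(j-1)x_0+kx_j+(k-j-1)x_k,\qquad j\in\{1,\ldots,k-1\},
\]
which rearranges to $k(x_j-x_0)\le j(x_k-x_0)$ and reduces to the stated $kx_j\le j(x_k-x_0)$ under the dehomogenizing convention $x_0=0$ used in Proposition~1 and Theorem~2.

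The main obstacle is the second step: concretely writing down the $k-1$ adjacent vertex monomials. Bihomogeneity pins the exponents down to a single possibility once one knows that only $a_0$, $a_j$, $a_k$ occur at $v_j$, but this last fact is the real content and uses the cube model of $\newt(\Delta_k)$ from~\cite[Ch.~10]{GKZ}. Short of quoting that model as a black box, one alternative is to send $\Delta_k(1,a_1,\ldots,a_{k-1},1)$ into the tropical limit $a_i\mapsto e^{-\lambda t_i}$ for a vector $t$ interior to the normal cone of $v'_0$ and to read off, as the dominant monomial along each $t$-ray leaving this cone, the adjacent vertex monomials $\pm a_j^k$.
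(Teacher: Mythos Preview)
Your proposal is correct and follows exactly the route the paper indicates: it mirrors the Ronkin-function/spine argument of Lemma~\ref{lem:cone} with the finest-subdivision vertex $(0,2,\ldots,2,0)$ replaced by the trivial-subdivision vertex $(k-1,0,\ldots,0,k-1)$ and its $k-1$ neighbors $(j-1,0,\ldots,0,k,0,\ldots,0,k-j-1)$, which you identify correctly via the GKZ cube model and bihomogeneity. You also rightly observe that the inequality you actually derive, $k(x_j-x_0)\le j(x_k-x_0)$, matches the paper's stated form $kx_j\le j(x_k-x_0)$ only under the dehomogenizing convention $x_0=0$ (consistent with the application to Theorem~2, where it becomes $\ga_j^k\le(\ga_k/\ga_0)^j$).
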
 

\noindent 
Lemma \ref{lem:back} follows easily from the 
development of \cite{GKZ,PT} just like Lemma 
\ref{lem:cone}, so we proceed to an analogue of Proposition 1:   
\begin{prop} The map 
$\Log|\cdot| : \iip_k \longrightarrow \Gamma'_k$ is a diffeomorphism. \qed  
\end{prop}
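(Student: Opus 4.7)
The plan is to mirror the proof of Proposition 1 step by step, with $\Gamma'_k$ replacing $\Gamma_k$, $\iip_k$ replacing $\sip_k$, and Lemma \ref{lem:back} replacing Lemma \ref{lem:cone}. The first observation, immediate from Definition 2, is that $\iip_k$ is a union of connected components of $\R_+^{k+1}\setminus\Delta_k^\dagger$: the number of real (equivalently, imaginary) roots of a polynomial and of each of its sign-flipped variants is locally constant on the complement of the reflected discriminant, and Conditions (iii)--(iv) of Definition 2 simply select those components on which this count is maximally imaginary.

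Next, playing the role of Lemma \ref{lem:push}, I exhibit a specific polynomial in $\iip_k$. Take $p_\eps(x)=1+\eps x+\eps x^2+\cdots+\eps x^{k-1}+x^k$ for $\eps>0$ small. As $\eps\to 0^+$, $p_\eps\to 1+x^k$, which has exactly one real root ($-1$) if $k$ is odd and no real roots if $k$ is even, i.e., maximally many imaginary roots. By continuity of roots, $p_\eps$ retains this property for all sufficiently small $\eps$. Since $a_0=a_k=1$ are not subject to sign flips, any $p_\eps^*$ obtained by flipping signs of middle coefficients of $p_\eps$ is again an $O(\eps)$-perturbation of the same $1+x^k$, so it too has maximally many imaginary roots. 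Hence $p_\eps\in\iip_k$.

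The analog of Lemma \ref{lem:cont}, namely the connectedness of $\iip_k$, is the main technical point. For any $p\in\iip_k$ with coefficients $(a_0,\ldots,a_k)$, consider the homotopy $p_s(x)=a_0+\sum_{j=1}^{k-1}sa_jx^j+a_kx^k$, $s\in(0,1]$, which scales all middle coefficients by $s$. In $\Log$-coordinates this corresponds to translation of $\Log|p|$ in the direction $(0,-1,\ldots,-1,0)$, which lies in the interior of the recession cone $C'_k$ described in Lemma \ref{lem:back}. I claim $p_s\in\iip_k$ for every $s\in(0,1]$. For $s$ sufficiently small this is clear: $p_s$ and all its sign-flipped variants are perturbations of the binomial $a_0+a_kx^k$, whose $k$ roots (the $k$-th roots of $-a_0/a_k$) are maximally imaginary and insensitive to middle sign flips. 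For intermediate $s$, the continuous path $p_s$ cannot leave and re-enter $\iip_k$, since doing so would force $\Log|p_s|$ to exit and later re-enter the amoeba-complement component that the path eventually settles into for $s$ small, contradicting the defining recession-cone property that translation in a direction of $C'_k$ from inside that component stays inside. This gives a path from any $p\in\iip_k$ to a fixed neighborhood of binomials in $\iip_k$, proving connectedness. I expect making this ``no round-trip'' step fully rigorous to be the main technical obstacle, requiring care analogous to the continuity argument in the proof of Lemma \ref{lem:cont}.

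Combining the three ingredients, $\iip_k$ is a single non-empty connected component of $\R_+^{k+1}\setminus\Delta_k^\dagger$. By the general correspondence recalled in the paragraph following Lemma \ref{lem:gkz}, $\Log|\cdot|$ maps $\iip_k$ diffeomorphically onto a single connected component of $\R^{k+1}\setminus\amoeba(\Delta_k)$. To identify this component as $\Gamma'_k$, observe that the $\Log|\cdot|$-image of $p_\eps$ is $(0,\log\eps,\ldots,\log\eps,0)$; for $\eps>0$ small this point moves to infinity in the interior of $C'_k$ (in the dehomogenization $x_0=x_k=0$, the inequalities $k\log\eps\leq 0$ hold strictly), hence lies within $\Gamma'_k$. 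By connectedness of $\iip_k$, we conclude $\Log(\iip_k)=\Gamma'_k$, as desired. \qed
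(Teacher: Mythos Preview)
Your proposal follows essentially the same route as the paper: mirror the proof of Proposition~1, and replace Lemma~\ref{lem:push} by the observation that $1+\eps x+\cdots+\eps x^{k-1}+x^k\in\iip_k$ for small $\eps$---this is exactly the paper's $q_k$ with $\eps=\lambda^{-1}$, and your identification of the component via the recession cone $C'_k$ matches the paper's reasoning verbatim.

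The only place you add substance is the connectedness step (the analog of Lemma~\ref{lem:cont}), which the paper leaves entirely implicit. Your scaling homotopy $p_s=s\,p+(1-s)(a_0+a_kx^k)$ is a natural replacement for the fibering in Lemma~\ref{lem:cont}, but the ``no round-trip'' justification you give is circular as written: you invoke the recession-cone property of $\Gamma'_k$ to keep the path inside $\iip_k$, yet identifying $\iip_k$ with $\Log^{-1}(\Gamma'_k)$ is precisely what you are trying to prove. The fix is to argue directly on the level of real roots. For $k$ even this is immediate: each sign-flipped variant satisfies $(p_s)^*=s\,p^*+(1-s)(a_0+a_kx^k)$, a convex combination of two polynomials that are strictly positive on~$\R$, hence $(p_s)^*>0$ and has no real roots. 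For $k$ odd one needs a bit more (e.g.\ track the unique real root, or use that $p^*$ and $a_0+a_kx^k$ each change sign exactly once and in the same direction), but the same convex-combination viewpoint works. With that replacement your argument is complete and matches the paper's intended proof.
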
 

\noindent 
Proposition 2 is proved in exactly the same way as Proposition 1, 
save that one uses a different deformation argument along the way: 
Lemma \ref{lem:push} is replaced by the observation that 
(a) $q_k(x):=1+\lambda^{-1}x+\cdots+\lambda^{-1}x^{k-1}+x^k\!\in\!\iip_k$ 
for all sufficiently large $\lambda$, and (b) the roots of 
$q_k$ approach those of $x^k+1$ as $|\lambda|\rightarrow \infty$. 

\medskip 
We are now ready to prove Theorem 2: \\
{\em Proof of Theorem 2:} 
The ``only if'' direction can be proved as follows: 
For any $j\!\in\!\{1,\ldots,k-1\}$, consider the 
polynomial $p_j(x):=(k-j)-kx^j+jx^k$. 
It is then easily checked that (a) $p_j$ has a unique degenerate 
real root, (b) $p_j$ has exactly $1$ or $2$ real roots according 
as $k$ is even or odd, (c) $p^-_{j,\eps}(x):=(k-j)-k(1-\eps)x^j+jx^k\in
\iip_k$ for all $\eps\!\in\!(0,1]$, and 
(d) $p^+_{j,\eps}(x):=(k-j)-k(1+\eps)x^j+jx^k\not\in\iip_k$ 
for all $\eps\!>\!0$ (To prove (a)--(d) one can simply 
apply Descartes' Rule of Signs and a clever formula for the 
discriminant of a trinomial from \cite[Prop.\ 1.2, pg.\ 217]{GKZ}.)
Thus, should the stated inequalities involving $(\ga_0,\ga_j,\ga_k)$ 
fail to hold, we can easily find an $\eps\!>\!0$ such that 
$T_\ga(p^-_{j,\eps})\!\not\in\!\iip_k$ (with $\ga:=(\ga_0,
\underset{j-1}{\underbrace{1,\ldots,1}},\ga_j,
\underset{k-j-1}{\underbrace{1,\ldots,1}},\ga_k)$) and obtain a \linebreak 

\vspace{-.6cm} 
\noindent  
contradiction. 
 
The proof of the ``if'' direction is more intricate, but follows
easily from our development: By Proposition 2 and Lemma 
\ref{lem:back}, $\Log|\cdot|$ of the set of 
$\ga=(\ga_0,\ldots,\ga_k)$ satisfying the stated inequalities is precisely 
the recession cone $C'_k$ of $\Gamma'_k$,
and\linebreak 
$\Log|\cdot| : \iip_k\longrightarrow \Gamma'_k$ is a diffeomorphism.
So any such $\ga$ satisfies $T_\ga\!\left(\iip_k\right)\!\subseteq\!\iip_k$, 
and we are done. \qed 

\section{Future Directions}

\noindent
{\bf Problem 1.} {\em How does one count connected components of the 
complement to the reflected discriminant of a given discriminant? In 
particular, is it true 
that the number of connected components of the complement to the reflected 
discriminant of univariate polynomials of degree $k$ restricted to $\R_+^k$ 
equals $2^{k}$? } 

\medskip
\noindent
{\bf Problem 2.} {\em Find an elementary proof of Theorem~\ref{th:main} 
avoiding the use of discriminant amoebae.} 

\medskip 
\noindent 
Regarding the last problem, we observe that we first derived our 
characterization of the recession cone relevant to $\sip_k$ via some quick, 
informal calculations using the {\em Horn-Kapranov Uniformization} 
\cite{kapranov,PT}. (The Horn-Kapranov Uniformization is a remarkably useful 
rational parametrization of the $A$-discriminant variety. An intriguing fact 
is that the resulting parametric formula for $H_{\Delta_k}$ has size polynomial 
in $k$, while $\Delta_k$ has a number of monomials (and coefficient 
bit-sizes) exceeding $2^{k-1}$ \cite{bhpr}.) It is likely that the 
Horn-Kapranov Uniformization can yield an alternative proof of 
Theorem \ref{th:main} without Ronkin functions. This could be seen 
as a step toward solving Problem 2. 

\section*{Acknowledgements} 
The second author thanks the Wenner Gren Foundation for the support of
his visit to Stockholm University during the start of this project.
The second author was also partially supported by 
NSF CAREER grant DMS-0349309 and Sandia National Laboratories. We are also 
sincerely grateful to P.\ Br\"anden for finding a mistake in the initial 
version of the paper (and pointing out a number of relevant references), and 
to Jan-Erik Bj\"ork for pointing out the reference \cite{ostrowski}.

\end{document}